\documentclass[12pt, a4paper,reqno]{amsart}

\usepackage[T1]{fontenc}
\usepackage[utf8]{inputenc}

\usepackage{color}
\definecolor{bleu_sombre}{rgb}{0,0,0.6}  \definecolor{rouge_sombre}{rgb}{0.8,0,0}\definecolor{vert_sombre}{rgb}{0,0.6,0}

\usepackage[english]{babel}
\usepackage{geometry}

\usepackage{amsmath,amssymb,amsthm,graphicx,amsfonts,url,enumerate,dsfont,stmaryrd, mathrsfs, csquotes}
\usepackage{mathabx}

\usepackage{pgf,tikz,pgfplots}
\usetikzlibrary{arrows}
\definecolor{uuuuuu}{rgb}{0.26666666666666666,0.26666666666666666,0.26666666666666666}

\theoremstyle{plain}
\newtheorem{theorem}{{Theorem}}[section]
\newtheorem*{theorem*}{{Theorem}}
\newtheorem{proposition}[theorem]{Proposition}

\newtheorem*{proposition*}{Proposition}

\newtheorem*{corollary*}{Corollary}

\newtheorem{assumption}[theorem]{Assumption}
\newtheorem*{lemma*}{Lemma}
\theoremstyle{definition}

\newtheorem*{definition*}{Definition}
\theoremstyle{remark}
\newtheorem{remark}[theorem]{Remark}

\makeatletter

\@addtoreset{equation}{section}
\makeatother

\renewcommand{\leq}{\leqslant}	\renewcommand{\geq}{\geqslant}

\newcommand{\R}{\mathbb{R}}

\renewcommand{\Re}{\mathrm{Re}\,}


\title[Curved magnetic edge]{Tunneling effect induced by a curved magnetic edge}

\author{S{\o}ren Fournais}

\author{Bernard Helffer}

\author{Ayman Kachmar}

\address{Department of Mathematics, Aarhus University, Ny Munkegade 118, 8000 Aarhus~C, Denmark}
\email{fournais@math.au.dk}

\address{Universit\'e de Nantes, Laboratoire Jean Leray, Nantes, France }
\email{Bernard.Helffer@univ-nantes.fr}

\address{Lebanese University, Department of mathematics, 1700 Nabatieh, Lebanon}
\email{akachmar@ul.edu.lb}

\begin{document}

\begin{abstract}
Experimentally observed magnetic fields with nanoscale variations are theoretically modeled by a piece-wise constant function with jump discontinuity along a smooth curve, the magnetic edge. Assuming the edge is a closed curve with an axis of symmetry  and the field is sign changing and with exactly two distinct values, we prove that semi-classical tunneling occurs and calculate the magnitude of this tunneling effect.\\

{\it This paper is dedicated to Elliott H. Lieb on the occasion of his 90th birthday.}
\end{abstract}
\maketitle

\section{Introduction}\label{sec:Int}

The purpose of  this paper is to study the magnetic Laplacian in $\mathbb R^2$,
  \begin{equation}\label{eq:P}
\mathcal P_{h}:=(ih\nabla+\mathbf A)^2=\sum_{j=1}^2 (ih \partial_{x_j} +A_j)^2,
\end{equation}
where $\mathbf A:=(A_1,A_2)\in H^1_\textrm{loc}(\mathbb R^2;\mathbb R^2)$, is a magnetic potential generating  the magnetic field $B=\textrm{ curl}\mathbf A:=\partial_{x_1}A_2-\partial_{x_2}A_1\in L^2_\textrm{ loc}(\mathbb R^2;\mathbb R)$.  
We will also discuss the case of the Neumann or Dirichlet realizations of $\mathcal P_h$ in smooth bounded planar domains.

Here $h$ is a positive parameter that tends to $0$, which  can be interpreted as the semi-classical parameter. By writing $h^{-2}\mathcal P_h=(i\nabla+h^{-1}\mathbf A)^2$, we observe that the semi-classical limit, $h\to0_+$, is equivalent to the strong magnetic field limit, $h^{-1}|B|\to+\infty$.

The spectrum of the operator $\mathcal P_h$
has been the subject of  an intense study  in the past decades, particularly in the context  of superconductivity where the magnetic field $B$ is typically a constant function \cite{BS98, Bon05, FH10, HM1, HP03, LuP99}. 

There is an interesting analogy between the results for the Neumann realization of $\mathcal P_h$  in a bounded smooth domain and those for the Schr\"{o}dinger operator, $-h^2\Delta+V$,  with an electric potential $V$, in the full plane. The Schr\"{o}dinger operator was intensively  studied by Helffer--Sj\"{o}strand  \cite{HeSj,HeSj5} and Simon \cite{S},  notably  in the context of quantum tunneling. Bound states of  
$-h^2\Delta+V$ concentrate near the `well' $\Gamma_V:=\{x\in\mathbb R^2\,|\,V(x)=\min_{\mathbb R^2} V\}$; if furthermore $\Gamma_V$ is a regular manifold (i.e. we have a degenerate well), bound states could concentrate near some points of $\Gamma_V$, the `mini-wells'. We have the same picture in the purely magnetic case with a Neumann boundary  condition: bound states  concentrate near the boundary of the domain, whereby the boundary plays the role of a (degenerate) well, and the set of points of maximum curvature plays the role of mini-wells, where bound states decay away from them. Optimal estimates describing the concentration of bound states are very important, since they lead to accurate asymptotics for the low lying eigenvalues. The proof of the decay away from the mini-wells (points of maximum curvature), is more delicate compared to that of the decay  away from the well (boundary).

In this paper, our main focus will be on magnetic fields having a jump-discontinuity.
Magnetic fields that vary on very short scales (nanoscales) have been observed experimentally, see e.g.~\cite{FLBP94}. Their theoretical investigations, in the context of quantum mechanics \cite{PM93, RP00} or graphene \cite{GDMH08}, involve the operator $\mathcal P_h$ but with the magnetic field $B$ being a step function having a discontinuity along a curve, that we will refer to as the \emph{magnetic edge}. 

Earlier rigorous results were devoted to  the case of a flat edge \cite{HPRS16, HS15, I85}.  More recently, non-flat edges have been considered in the context of spectral  asymptotics \cite{A20, AHK} and in the  context  of superconductivity \cite{AKPS19}.  The magnetic edge will play the role of the `well', while  the `mini-wells' are the points  of  maximum curvature of the magnetic edge \cite{AK20}, which is interestingly  in analogy with  the setting of the Neumann realization with a constant magnetic field in a bounded smooth domain.

The case of a single mini-well, where the curvature of the edge  has  a  unique and non-degenerate maximum, was analyzed by Assaad--Helffer--Kachmar  \cite{AHK}. The present paper investigates the situation of a symmetric edge with several mini-wells,  the  simplest case being when there are two non-degenerate maxima of the boundary curvature. We establish a sharp asymptotics of the splitting between the energies of the ground and first excited state, which measures a tunneling effect induced by the geometry of the edge, see Theorem~\ref{thm:FHK} below which is our main result.

Let  us  recall how the   general  strategy of Helffer--Sj\"{o}strand \cite{HeSj, HeSj5} has been applied recently to understand the tunneling effect for the Neumann realization in a  bounded domain
with the breakthrough paper \cite{BHR21} by Bonnaillie-No\"{e}l--H\'erau--Raymond as the crowning achievement.
The first step, already performed in  \cite{HM1} and \cite{FH06}, was the analysis of  a model with a flat boundary  (de\,Gennes model), which yields  localization of bound states near the boundary of the domain (the well), and consequently, leads to a full  asymptotics of the low-lying eigenvalues. The second  step is a formal WKB expansion of bound states \cite{BHR15}. The third step consists of optimal decay estimates of bound states recently achieved in \cite{BHR21}. The importance of this step is  that  it  allows one to rigorously approximate the bound states by the formal WKB expansions,  which eventually paves the way  for the analysis of an interaction matrix whose eigenvalues measure the tunneling effect. The same approach  has been successfully applied in  the context of thin domains \cite{KR17} and the Robin  Laplacian \cite{HK-tams, HKR},  where the proof of the tangential estimates was less technical.

We will follow the same approach outlined above in the case of our discontinuous magnetic field. The model problem with a flat edge was analyzed in \cite{AK20} (see also \cite{AKPS19, HPRS16}), while the full asymptotics for the low lying eigenvalues are obtained in \cite{AHK}. So  we still need WKB expansions  and optimal tangential estimates of bound  states, which we do in the present contribution. Finally, after establishing the  WKB approximation, the
analysis of the interaction matrix   is  quite standard.

 Let us give  an informal statement of our  main result (Theorem~\ref{thm:FHK} below). Suppose that $\Gamma$ is  a smooth, closed curve in $\mathbb R^2$, symmetric with respect to an axis, and with two points of maximum curvature, denoted  by  $s_\ell$  and $s_r$ ($\ell$ refers to ``left'' and $r$ to ``right'', see Fig.~\ref{fig2}).  Consider the magnetic field satisfying $B=1$ in  the interior of $\Gamma$, and $B=a\in(-1,0)$ in the exterior of $\Gamma$.  Under  these assumptions, we  prove that, as $h\to0_+$,  the  spectral gap of the  operator $\mathcal P_h$ is of exponential  order,
\begin{equation}\label{eq:informal-tun}
\lambda_2(h)-\lambda_1(h)\approx \exp\Big(-\frac{\mathsf S^a}{h^{1/4}} \Big),
\end{equation}
where  $\mathsf  S^a$ is the Agmon distance between the  ``wells'' $s_\ell$ and $s_r$ defined by  an appropriate potential that depends on the magnetic field (through the parameter $a$) and the  geometry of $\Gamma$ (through the curvature). 
\begin{figure*}[t]
\includegraphics[width=10cm]{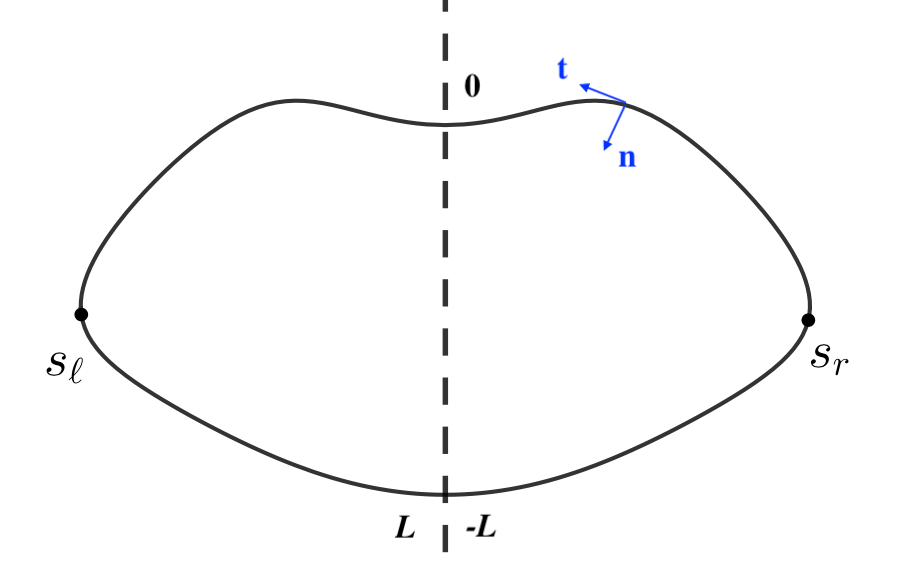}
\caption{A symmetric  domain with respect to  the $y$-axis (dashed line).  The orientation of the boundary is  defined by the direct frame $(\mathbf t,\mathbf n)$, where $\mathbf n$ is the inward normal vector  and $\mathbf t$ is  the unit tangent.  The curvature  along the boundary has two  non-degenerate maxima at the points $a_1$ and $a_2$, with  arc-length coordinates $ s_\ell\in [0,L) $ and $ s_r\in(-L,0]$,   connected by  upward  and downward geodesics  oriented counterclockwise and represented  by $[s_r,0]\cup(0,s_\ell]$ and $[s_\ell,L]\cup(-L,s_r]$  respectively.  These upward and downward geodesics will be denoted by $[s_r,s_\ell]$ and $[s_\ell,s_r]$ respectively.}\label{fig2}
\end{figure*}
The  asymptotics in \eqref{eq:informal-tun} (more precisely that in Theorem~\ref{thm:FHK})  is a consequence of  quantum tunneling.  It is important to note  that it is induced purely by the magnetic field, thereby providing an
example of  a  purely magnetic quantum tunneling---where the case of \cite{BHR21} also required the interaction with the boundary.  If  we look at earlier results on  the tunneling effect, with or without  magnetic field,  we observe that the tunneling is induced by an external potential \cite{HeSj, FSW} or by confinement to a bounded/thin domain \cite{HKR, KR17, BHR}.  For the Neumann realization of $\mathcal P_h$,  the presence of  the magnetic field adds a challenging difficulty in the estimate of  the magnitude of the  tunneling that was recently solved in \cite{BHR21}. Our proof of \eqref{eq:informal-tun} is very close to that of \cite{BHR21}, but it relies on new elements that follow from a deep investigation of magnetic steps \cite{AK20, AHK}.

Let  us  give   some of the heuristics  behind  the computations leading to  \eqref{eq:informal-tun}. We can construct two quasi-modes having  the following structure
\begin{align*}
\Psi_{h,\ell}(s,t)&\approx e^{i\theta_{h,\ell}(s)}\,e^{-\Phi_\ell(s)/h^{1/4}}\,f_{0,\ell}(s)\phi_a(h^{1/2}t),\\ \Psi_{h,r}(s,t)&\approx e^{i\theta_{h,r}(s)}e^{-\Phi_r(s)/h^{1/4}}\,f_{0,r}(s)\phi_a(h^{1/2}t),
\end{align*}
where
$s$ denotes the arc-length parameter along $\Gamma$, $t$ denotes the normal distance to $\Gamma$  with the convention that $t>0$  in  the interior of $\Gamma$  and  $t<0$ in the exterior of $\Gamma$. The functions  $\Phi_\ell$ and $\Phi_r$ are  non-negative and satisfy
$\Phi_{\ell}(s_\ell)=0$ and $\Phi_r(s_r)=0$, so that  $\Psi_{h,\ell}$ (resp.  $\Psi_{h,r}$) is localized near $s_\ell$  (resp. near $s_r$).  The phase functions  $\theta_{h,\ell}$ and $\theta_{h,r}$ involve  the topology of the discontinuity curve and  a spectral constant.  The function $\phi_a$ is the ground state eigenfunction of   a model operator  related to the discontinuity of the magnetic field (see Sec.~\ref{sec:ms}).  Finally,  $f_{0,\ell}$ and $f_{0,r}$ are solutions of  appropriate transport equations (see Theorem~\ref{thm:WKB}).

 Up to  truncation,  the quasi-modes $\Psi_{h,\ell}$ and  $\Psi_{h,r}$ are approximations of actual bound states 
 \begin{equation}\label{eq:app-introduction}
g_{h,\ell}(s,t)\approx  \Psi_{h,\ell}(s,t),\quad g_{h,r}(s,t) \approx  \Psi_{h,r}(s,t),
\end{equation}
where the bound states $g_{h,\ell}$ and $g_{h,r}$ are defined via the orthogonal  projection $\Pi$ on $ V:=\oplus_{i=1}^2\textrm{ Ker}\left(\mathcal P_h-\lambda_i(h)\right)$ as follows
\[
g_{h,\ell}(s,t):=\Pi\Psi_{h,\ell}(s,t),\quad
g_{h,r}(s,t):=\Pi\Psi_{h,r}(s,t).\]
By the Gram-Schmidt process, we transform $\{g_{h,\ell},g_{h,r} \}$  to an orthonormal basis $\mathcal B$ of $V$ and we denote  by $\mathsf  M_h$ the matrix   relative to $\mathcal B$ of the restriction of $\mathcal P_h$ to $V$.   The spectral  gap for the operator $\mathcal P_h$ is the same as that for the matrix $\mathsf M_h$,
\[\lambda_2(h)-\lambda_1(h)= \lambda_2(\mathsf M_h)-\lambda_1(\mathsf M_h) \,.\]
Using the approximation in  \eqref{eq:app-introduction}, we get an approximate matrix $\widehat{\mathsf M}_h$ of  $\mathsf M_h$ whose  spectral gap can be  explicitly estimated (compare with  \eqref{eq:informal-tun})
\[ \lambda_2(\widehat{\mathsf M}_h)-\lambda_1(\widehat{\mathsf M}_h)\approx \exp\Big(-\frac{\mathsf S^a}{h^{1/4}} \Big)\,.\]
We have then to  show that the spectral  gap for the  matrix $\widehat{\mathsf M}_h$  is a good approximation of that of $\mathcal P_h$ up to an appropriate remainder, more precisely
\[ \lambda_2(\mathsf M_h)-\lambda_1(\mathsf M_h)= \big(\lambda_2(\widehat{\mathsf M}_h)-\lambda_1(\widehat{\mathsf M}_h)\big)\big(1+o(1)\big)\,.\]
Such an estimate is closely related to optimal decay estimates of bound states (resp. approximate bound states) of the operator $\mathcal P_h$, which yield  accurate errors for the approximation in \eqref{eq:app-introduction}.
\subsection*{Organisation}
 In Section~\ref{sec:B=cst}, we review  the recent result of \cite{BHR21} for the  Neumann magnetic Laplacian with  a  constant magnetic field, and introduce the related de\,Gennes model for a flat boundary. In  Section~\ref{sec:ms}, we introduce the magnetic edge along with  the related flat  edge model, and state our main result, Theorem~\ref{thm:FHK}, for the operator $\mathcal P_h$ with  a magnetic step. In Section~\ref{sec:curv}, we express $\mathcal P_h$ in a Fr\'enet frame and reduce the spectral analysis to an  operator defined near the edge. In Section~\ref{sec:WKB}, we introduce operators with a single well (with ground states localized near a single point of maximum curvature), and perform a WKB expansion for an approximate ground state (see Theorem~\ref{thm:WKB}). In Section~\ref{sec:dec}, we explain how optimal tangential estimates can be derived along the lines of the proof of the similar statement in \cite{BHR21}. Finally, in Section~\ref{sec:IM}, we introduce the interaction matrix and  finish the proof of Theorem~\ref{thm:FHK}, by referring to \cite{BHR21} for the detailed computations,  which are essentially  the same in  our setting. 
\section{Uniform magnetic fields}\label{sec:B=cst}
In this section, we review some results on the Neumann  realization of the  operator $\mathcal P_h$ with a constant magnetic field. We assume that 
\begin{equation}\label{eq:Sigma}
\left\{\,\begin{aligned}
&\Omega\subset\mathbb R^2\text{ is a  }\text{simply}\text{ connected open set,}\\
&\Sigma:=\partial\Omega\text{ is a }C^\infty\text{ smooth closed curve.}\\
\end{aligned}\,\right\}
\end{equation}
and
\begin{equation}\label{eq:B=1}
\mathbf A=\frac12(-x_2,x_1)\quad\textrm{ and}\quad B=\textrm{ curl}\,\mathbf A\equiv1\,,
\end{equation}
We consider  $\mathcal P_h$  introduced in \eqref{eq:P},  as a self-adjoint operator in  $L^2(\Omega)$, with domain,
\begin{equation*}
\textsf{ Dom}(\mathcal P_{h})=\{u\in H^2(\Omega)~|~{\mathbf  n}\cdot(h\nabla-i\mathbf A)u|_{\partial \Omega}=0 \},
\end{equation*}
where $H^2(\Omega)$ denotes the Sobolev space $W^{2,2}(\Omega)$, and $\mathbf n$ the unit normal vector of  $\Sigma$,  pointing inwards with respect  to $\Omega$.
\subsection{Full asymptotics and decay of bound states}
 The conditions in \eqref{eq:Sigma} ensure that $\Omega$ is bounded and that $\mathcal P_h$ has compact resolvent. Let $(\lambda_n(h))_{n\geq 1}$ be the sequence of eigenvalues of $\mathcal P_h$.  
In generic  situations, that will  be explained precisely later on, there exist complete expansions of the eigenvalues of $\mathcal P_h$, in the form \cite{FH06},
\begin{equation}\label{eq:lambdan}
\lambda_n(h)\sim\Theta_0h -k_{\max}C_1h^\frac 32 +C_1\Theta_0^{\frac 14}(2n-1)\sqrt{-\frac 32  k_2}h^\frac 74+\sum_{j\geq 15}\zeta_{j,n} h^{j/8}\,.
\end{equation}
The coefficients $\Theta_0$ and $C_1$ appearing in \eqref{eq:lambdan} are universal positive constants related to  the de\,Gennes model in the half-plane (see Sec.~\ref{sec:dG}).  The coefficients $k_{\max}$
and  $k_2$ are related to  the curvature on  the boundary. Let $\Sigma$ be parameterized by arc-length $s$ and  denote by $k(s)$  the curvature of $\Sigma$ at $s$ (see Sec.~\ref{sec:curv} for the precise definition of $k$; in particular  the orientation is chosen so that $k\geq 0$ if $\Omega$ is convex). The asymptotics in \eqref{eq:lambdan} holds provided  the curvature $k$ attains its maximum value non-degenerately and at a unique point, i.e. 
\begin{equation}\label{eq:hyp-gd}
k_{\max}:=\max_{\Sigma} k(s)=k(0)\quad\textrm{ with}\quad  k_2:=k''(0)<0\,.
\end{equation}
The sequence $(\zeta_{j,n})_{j\geq 15}$ is constructed recursively, and it can be shown that  $\zeta_{j,1}=0$ for odd  $j$ \cite{BHR15}.

The  derivation of \eqref{eq:lambdan} is related to the decay of bound states. Assume that $n$ is fixed and for all $h>0$ that $u_{h,n}$ is an eigenfunction of $\mathcal P_h$, normalized in $L^2(\Omega)$ and  with eigenvalue $\lambda_n(h)$. There  exist constants $\alpha_1,C_{1,n}>0$ such that
\[ \int_\Omega |u_{h,n}|^2\exp\Big(\frac{\alpha_1 \textrm{ dist}(x,\Sigma)}{h^{1/2}}\Big)dx\leq C_{1,n}\,.\]
This estimate says that the bound state $u_{h,n}$ concentrates near the boundary $\Sigma$ and is valid even when \eqref{eq:hyp-gd} is not satisfied \cite{HM1}. If moreover \eqref{eq:hyp-gd} holds, then $u_{h,n}$ concentrates near the point of maximal curvature as follows: There exist constants $\epsilon_0,\alpha_2,C_{2,n}>0$ such that \cite{FH06}
\begin{equation}\label{eq:dec-uh}
\int_{\textrm{ dist}(x,\Sigma)<\epsilon_0}|u_{h,n}|^2\exp\Big(\frac{\alpha_2|s(x)|^2}{h^{1/4}}\Big)dx\leq C_{2,n}\,,
\end{equation}
where $s(x)$ denotes the arc-length coordinate of the point $p(x)\in\partial\Omega$ defined by 
$\textrm{dist}(x,p(x))=\textrm{ dist}(x,\Sigma)$. The decay estimate \eqref{eq:dec-uh} is a key ingredient in the derivation of  the  asymptotics in \eqref{eq:lambdan}, but is not sufficient to handle the case of symmetries that we shall discuss below.
 
Let us  examine    the case  where the curvature attains its maximum at several points $s_1,\cdots,s_N$. For all $j\in\{1,\cdots,N\}$  and $m\in\mathbb N$, we introduce
\[\lambda_{m,j}^\textrm{ app}(h)=\Theta_0h -k_{\max}C_1h^\frac 32 +C_1\Theta_0^{\frac 14}(2m-1)\sqrt{-\frac 32  k''(s_j)}\,h^\frac 74\,. \]
Consider a relabeling  $(m_n,j_n)_{n\geq 1}$ of $(m,j)_{m\geq 1,1\leq j\leq N}$ such that
\[\lambda_{m_1,j_1}^{\textrm{app}}\leq \lambda_{m_2,j_2}^\textrm{app}\leq \cdots \,.\]
Then, \eqref{eq:lambdan} is replaced with
\begin{equation}\label{eq:mult-well}
\lambda_n(h)=\lambda^\textrm{app}_{m_n,j_n}+o(h^{\frac74})\,. 
\end{equation}
If  additionally $k''(s_{j_1})=k''(s_{j_2})$,  then  $\lambda_2(h)-\lambda_1(h)=o(h^{\frac{7}4})$ and we loose the information on the simplicity of the eigenvalues. 
Consequently, we need a more detailed analysis in the  case of symmetries, which  will rely on an optimal tangential decay estimate improving the one given   in \eqref{eq:dec-uh}.  We will discuss  these decay estimates later in Sec.~\ref{sec:dec}. Our next step is the review of an important model with a flat boundary.

\subsection{The de\,Gennes model: flat boundary}\label{sec:dG}

The analysis of the model case where $\Omega=\mathbb R\times\mathbb R_+$ and $B=\textrm{ curl\,}\mathbf A=1$   leads us naturally to the family (parametrized by $\xi\in \mathbb  R$) of harmonic oscillators (de\,Gennes model)
\begin{equation}
\mathfrak h^{N}[\xi]=-\frac{d^2}{d\tau^2}+(\xi+\tau)^2,
\end{equation}
on the semi-axis  $\mathbb R_+$ with Neumann boundary condition at $\tau=0$. Let us denote by $(\mu_j^{N}(\xi))_{j\geq 1}$ the sequence of eigenvalues of $\mathfrak h^N[\xi]$. The de\,Gennes constant is then defined as follows
\begin{equation}
\Theta_0=\inf_{\xi\in\mathbb R}\mu^{N}_1(\xi)\,.
\end{equation}
There exists a unique minimum $\xi_0<0$ such that
\[ \Theta_0=\mu^{N}_1(\xi_0)\,.\]
Furthermore, $\xi_0=-\sqrt{\Theta_0}$, $(\mu^N_{1})''(\xi)>0$ and $\frac12<\Theta_0<1$.   Denoting by  $u_0$  the positive and normalized ground state of $\mathfrak h^N[\xi_0]$, we can introduce the constant $C_1$ appearing in \eqref{eq:lambdan},
\begin{equation}
C_1=\frac{|u_0(0)|^2}{3}\,.
\end{equation}

\subsection{Symmetric domains and tunneling}\label{sec:SymDom}
We  continue to work under the conditions in \eqref{eq:Sigma} but  we assume furthermore that the domain $\Omega$  is symmetric with respect to an axis and the curvature of its boundary $\Gamma$ has exactly two non-degenerate maxima. More precisely, the hypotheses are (see Fig~\ref{fig2}):
\begin{assumption}\label{ass:symN}~
\begin{enumerate}[\rmfamily i)]
	\item $\Omega$ is symmetric with respect to the $y$-axis.
	\item The curvature $k$ on  $\Sigma$ attains its maximum at exactly two symmetric points $a_1=(a_{1,1},a_{1,2})$ and $a_2=(a_{2,1},a_{2,2})$ with $a_{1,1}<0$ and  $ a_{2,1}>0$.
	\item  Denoting by $s_r$ and $s_\ell$ the arc-length  coordinates of $a_1$ and $a_2$ respectively, we have $k''(s_r)=k''(s_\ell)<0$.
\end{enumerate}
\end{assumption}
This situation induces a tunneling effect where the energy difference between the ground and first excited states is exponentially small. The magnitude of this splitting  has been  rigorously computed  recently in \cite{BHR21}. 

Let us introduce the  following effective quantities:
\begin{equation}\label{eq:eff-pot}
V(s)=\frac{2C_1(k_{\max}-k(s))}{(\mu_1^N)''(\xi_0)}\,,
\end{equation}
and
\begin{equation}\label{eq.Aud}
\begin{aligned}
\mathsf{A}_{\mathsf{u}}&=\exp\left(-\int_{[s_{r}, 0]} \frac{ (V^\frac 12 )' (s)+g}{ \sqrt{V(s)}} ds\right)\,,\\
\mathsf{A}_{\mathsf{d}}&=\exp\left(-\int_{[s_{\mathsf{\ell}}, L]} \frac{ (V^\frac 12 )' (s) -g}{ \sqrt{V(s)}} ds\right)\,,\\
g&=\left(V''(s_{r})/2\right)^\frac 12=\left(V''(s_{\mathsf{\ell}})/2\right)^\frac 12\,.
\end{aligned}
\end{equation}
In  the above formulae, $0$  and $L$ are the arc-length coordinates of the points of intersection 
between the  $y$-axis and the curve $\Sigma$, with the convention that $0$ represents  the point on the  upper  part of $\Sigma$ (see Fig.~\ref{fig2}).
\begin{theorem}[Bonnaillie-No\"{e}l--H\'erau--Raymond \cite{BHR21}]\label{thm:BHR}~
Suppose that  \eqref{eq:Sigma},  \eqref{eq:B=1} and Assumption~\ref{ass:symN} hold. Then the first and second eigenvalues of  $\mathcal P_h$ satisfy, as $h\to0_+$,
\[\lambda_2(h)-\lambda_1(h)=2| w(h)|+o(h^{\frac{13}{8}}e^{-\mathsf{S}/h^{\frac 14}})\,,\]
where
\begin{align*}
w(h)&=( \mu_1^N)''(\xi_0) h^{\frac{13}{8}} \pi^{-\frac 12} g^{\frac12}\\
&\quad \times
\left(\mathsf{A}_{\mathsf{u}} \sqrt{V(0)}e^{- \mathsf{S}_{\mathsf{u}}/h^{1/4}} e^{iL f(h)}+\mathsf{A}_{\mathsf{d}} \sqrt{V(L)}e^{- \mathsf{S}_{\mathsf{d}}/h^{1/4}} e^{-iLf(h)}\right)\,,
 \end{align*}
 and
\begin{enumerate}[\rmfamily i.]
\item The potential $V$ is introduced in \eqref{eq:eff-pot};
\item  $\mathsf S$ is  the Agmon distance between the wells,
\begin{equation}\label{defS}
\mathsf{S} =\min \left(\mathsf{S}_{\mathsf{u}},\mathsf{S}_{\mathsf{d}}\right),~  \mathsf{S}_{\mathsf{u}}=\int_{[s_{r},s_{\mathsf{\ell}}] } \sqrt{V(s)} \,ds,~  \mathsf{S}_{\mathsf{d}}=\int_{[s_{\mathsf{\ell}}, s_{\mathsf{r}}] } \sqrt{V(s)} \,ds\,;
\end{equation}
\item $\mathsf{A}_{\mathsf{u}}$, $\mathsf{A}_{\mathsf{d}}$ and $g$ are defined in \eqref{eq.Aud};
\item $f(h)=\gamma_0/h+\xi_0/h^{1/2}-\alpha_0$ with
\[\gamma_0=\frac{|\Omega|}{|\Sigma|},\]
where $|\Sigma|$ is the length of $\Sigma$, and $\alpha_0$ is a constant dependent on $\Omega$.
\end{enumerate}
\end{theorem}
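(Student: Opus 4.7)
The plan is to follow the Helffer--Sj\"ostrand strategy in its magnetic incarnation, exactly as outlined informally earlier in the introduction. The first step is to localize the spectral problem to a tubular neighborhood $\{\mathrm{dist}(x,\Sigma)<\epsilon_0\}$ of the boundary, which is justified by the normal decay estimate \eqref{eq:dec-uh}. In Frenet coordinates $(s,\tau)$ with $\tau$ the inward normal distance, $\mathcal P_h$ is conjugated (via a convenient gauge choice) to an operator on $\mathbb R/L\mathbb Z\times\mathbb R_+$ with Neumann condition at $\tau=0$; this is the setting inherited from \cite{FH06} and refined in \cite{BHR15}. A Born--Oppenheimer reduction in $\tau$, rescaling $\tau=h^{1/2}t$, identifies the transverse fiber operator with the de\,Gennes operator $\mathfrak h^N[\xi]$. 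Pushing this reduction further produces an effective operator on $\mathbb R/L\mathbb Z$ that behaves to leading orders like
\[
\Theta_0 h - k_{\max} C_1 h^{3/2} + C_1 h^{3/2}\bigl(k_{\max}-k(s)\bigr) + \tfrac12 \mu''(\xi_0)\, h^{7/4}\bigl(-h^{1/4}\partial_s+\text{phase}\bigr)^2 + \cdots ,
\]
in which the potential well structure $V(s)$ of \eqref{eq:eff-pot} is visible and the natural Agmon weight is $\Phi(s)/h^{1/4}$.

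Next I would construct WKB quasi-modes $\Psi_{h,\sharp}$, $\sharp\in\{\ell,r\}$, localized at each maximum of $k$. Writing the ansatz
\[
\Psi_{h,\sharp}(s,\tau)\sim e^{i\theta_{h,\sharp}(s)}\,e^{-\Phi_\sharp(s)/h^{1/4}}\sum_{j\geq 0} h^{j/8} f_{j,\sharp}(s)\, u_0\bigl(\tau/h^{1/2}\bigr) ,
\]
and plugging it into the eigenvalue equation, the eikonal equation at the first nontrivial order reads $(\Phi_\sharp')^2=V$, forcing $\Phi_\sharp(s)$ to be the Agmon distance to $s_\sharp$ in the metric $V(s)\,ds^2$ (defining $\mathsf S_{\mathsf u}$, $\mathsf S_{\mathsf d}$). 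The first transport equation is a linear inhomogeneous ODE for the amplitude $f_{0,\sharp}$, whose explicit integration produces the exponential prefactors $\mathsf A_{\mathsf u},\mathsf A_{\mathsf d}$ given in \eqref{eq.Aud}, the constant $g$ appearing from the evaluation of $(V^{1/2})'$ at the wells. The phase $\theta_{h,\sharp}$ is fixed by single-valuedness around $\Sigma$: the cohomological obstruction comes from the total magnetic flux $|\Omega|$ through $\Omega$ and from the spectral constant $\xi_0$ of the transverse model, and it gives precisely the form $\theta_{h,\sharp}'(s)= \gamma_0/h + \xi_0/h^{1/2}-\alpha_0+\cdots$ with $\gamma_0=|\Omega|/|\Sigma|$, hence the oscillating factors $e^{\pm iLf(h)}$ in the final formula.

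The hard part is the optimal tangential Agmon estimate: I must upgrade \eqref{eq:dec-uh} to
\[
\int \abs{u_{h,n}(s,\tau)}^2 e^{2\Phi_\sharp(s)/h^{1/4}}\,ds\,d\tau \leq C_n,
\]
with the correct Agmon weight $\Phi_\sharp$. This is the delicate step achieved in \cite{BHR21}: it proceeds by a multi-scale Agmon argument on the effective one-dimensional operator, using commutator identities against the weight $e^{\Phi_\sharp/h^{1/4}}$ combined with a Persson-type control of the essential behaviour outside the wells. With this estimate in hand, the projected states $g_{h,\sharp}:=\Pi\Psi_{h,\sharp}$ coincide with the WKB quasi-modes up to an $L^2$-error $\mathcal O(e^{-(\mathsf S+\eta)/h^{1/4}})$ for some $\eta>0$.

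Finally I would perform the interaction matrix analysis. After Gram--Schmidt in $V=\mathrm{span}\{g_{h,\ell},g_{h,r}\}$, symmetry forces the two diagonal entries of $\mathsf M_h$ to be equal up to exponentially smaller terms, while the off-diagonal entry reduces, by integration by parts on a curve $\Gamma_{\mathsf u}\cup\Gamma_{\mathsf d}$ separating the two wells, to a sum of two boundary integrals running along the upper and lower geodesics joining $s_r$ to $s_\ell$. Inserting the WKB form of $\Psi_{h,\ell},\Psi_{h,r}$ and applying Laplace's method in $s$ (with the quadratic profile of $\Phi_\sharp$ near the wells yielding the factor $g^{1/2}\pi^{-1/2}$) gives
\[
w(h)=\tilde w(h)\bigl(1+o(1)\bigr),
\]
with $\tilde w(h)$ as in the statement. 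Since the spectral gap of a $2\times 2$ Hermitian matrix with equal diagonal entries and off-diagonal $w$ is $2|w|$, one obtains $\lambda_2(h)-\lambda_1(h)=2|\tilde w(h)|+o(h^{13/8}e^{-\mathsf S/h^{1/4}})$, completing the proof.
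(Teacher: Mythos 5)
The paper does not actually prove Theorem~\ref{thm:BHR}: it is stated in Section~\ref{sec:SymDom} as a review of the result of Bonnaillie-No\"{e}l--H\'erau--Raymond \cite{BHR21}, with no proof given. The only comparison point inside this paper is the proof of the analogous Theorem~\ref{thm:FHK} for magnetic steps (Sections~\ref{sec:curv}--\ref{sec:IM}), which follows the same architecture as \cite{BHR21}. Against that template, your outline is structurally faithful: boundary localization, passage to Fr\'enet coordinates and an effective operator, WKB quasi-modes localized at the two curvature maxima, optimal tangential Agmon estimates, and the interaction-matrix computation with the flux-induced phase $\gamma_0/h + \xi_0/h^{1/2}$.

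However, there is a genuine gap at the step you yourself flag as ``the hard part.'' You claim the optimal tangential estimate is obtained ``by a multi-scale Agmon argument on the effective one-dimensional operator, using commutator identities against the weight $e^{\Phi_\sharp/h^{1/4}}$ combined with a Persson-type control.'' That is not the method of \cite{BHR21}, and crucially, that kind of argument was known \emph{not} to reach the sharp Agmon constant: the commutator/energy approach had been pushed in \cite{FH06} and only yields the coarser weight $e^{\alpha_2|s|^2/h^{1/4}}$ as in \eqref{eq:dec-uh}, which loses the prefactor structure and cannot separate $\mathsf S_{\mathsf u}$ from $\mathsf S_{\mathsf d}$ in the exponentials. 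The decisive innovation of \cite{BHR21} --- and the ingredient this paper transports to the magnetic-step setting in Section~\ref{sec:dec} --- is the use of semiclassical pseudo-differential calculus with operator-valued symbols: one Weyl-quantizes the conjugated operator $\mathcal N_\hbar^\varphi$, truncates the frequency variable, sets up a Grushin problem $\mathcal P_z$ with principal symbol built from the fiberwise ground state $v_\xi$, constructs a parametrix $\mathcal L_z^{[3]}$ to order $\hbar^2$, and controls the effective $1\times1$ corner symbol $q_z^\pm$ via the Fefferman--Phong inequality after rescaling. Without this symbol-level reduction, the interaction-matrix computation you outline in your last paragraph does not close: you cannot justify replacing $g_{h,\sharp}$ by the WKB profiles with an error $o(e^{-\mathsf S/h^{1/4}})$, which is exactly what is needed for the claimed remainder $o(h^{13/8}e^{-\mathsf S/h^{1/4}})$. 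A smaller imprecision: the factor $g^{1/2}\pi^{-1/2}$ arises from the normalization choice $\tilde f_0(0)=(g/\pi)^{1/4}(\mathsf A_{\mathsf u})^{1/2}$ of the transport amplitude (a Laplace computation near the well used to normalize $\Psi_{h,\sharp}$), not from a Laplace evaluation of the matrix element itself, which after integration by parts reduces to transverse boundary integrals at the two fixed abscissae $\sigma=0$ and $\sigma=-L$.
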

 Theorem~\ref{thm:BHR} can be extended to the situation of $N\geq 3$ wells, which corresponds to a domain having symmetry by rotation of angle $2\pi/N$ and $N$ points of maximum curvature (see Fig.~\ref{fig:star}).
\begin{figure*}[t]
\includegraphics[width=5cm]{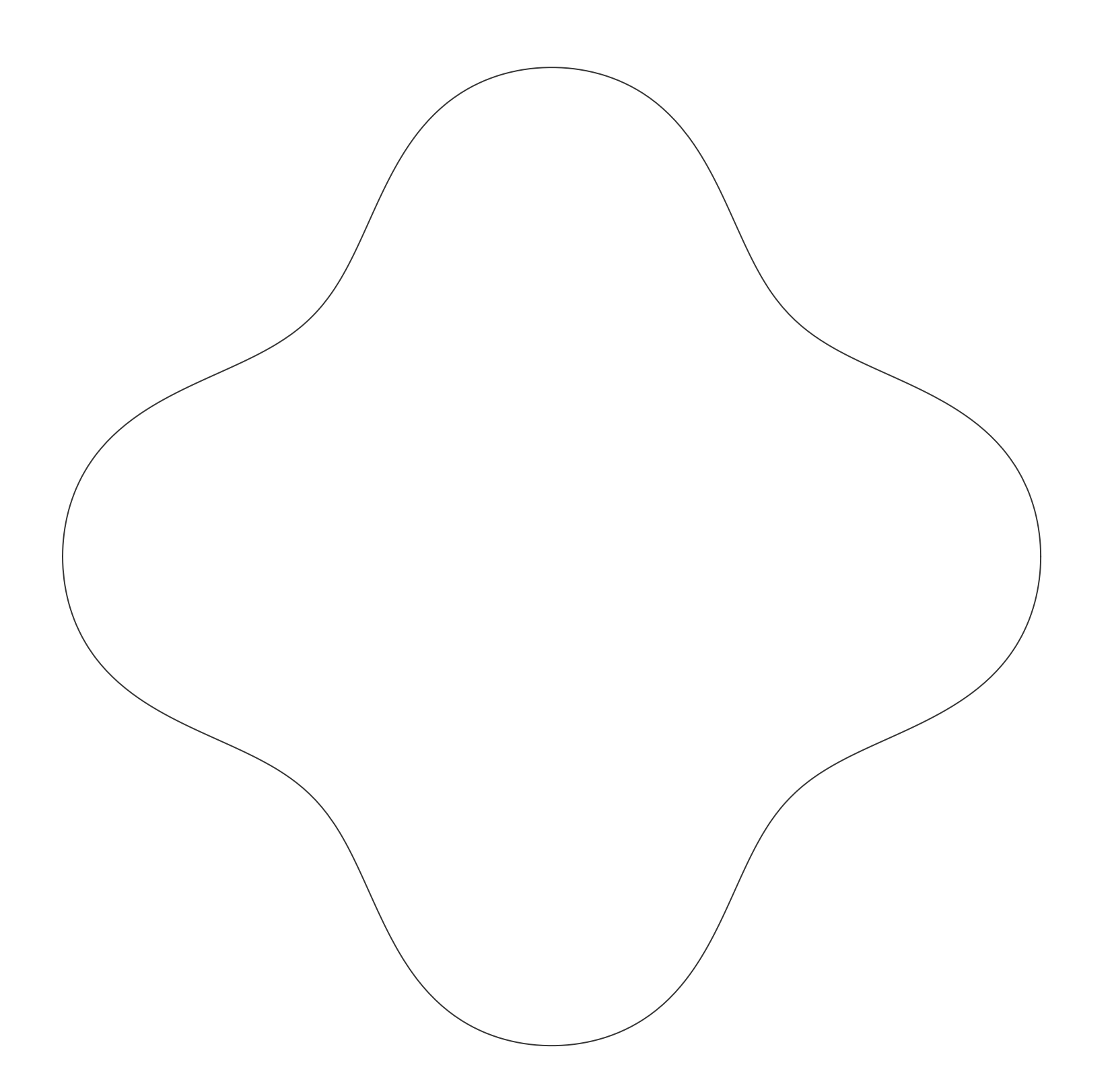}
\caption{A symmetric  domain with respect to  the origin with $N=4$ points of maximum curvature.}\label{fig:star}
\end{figure*}
\section{Magnetic steps}\label{sec:ms}

 The tunneling effect in Theorem~\ref{thm:BHR} is a consequence of  the magnetic field and imposing the Neumann  boundary condition (if a magnetic field were not present, the first eigenvalue would be simple and equal to $0$, while the Neumann boundary condition inforces bound states to concentrate near  the boundary  points of maximum curvature thereby inducing a phenomenon of \emph{multiple wells}).
 
 The present contribution is concerned with the following question:\\ \textit{Can we observe a tunneling  effect, similar to the one in Theorem~\ref{thm:BHR},  but induced purely by the magnetic field ?}\\ 
 That is, we would like to construct  an example where the tunneling is not a consequence of imposing a boundary condition, but rather a consequence of  the nature of  the magnetic field.
We will give an  affirmative answer  by working in the full plane $\mathbb R^2$ and considering a magnetic field with a discontinuity  along a smooth curve\footnote{ From a technical perspective, the magnetic discontinuity curve plays the same role in our case as the boundary does in Theorem~\ref{thm:BHR}.}  (the magnetic edge). 
In  the case of a flat  edge, we get  a  model in the  full plane  which  plays the role of  the de\,Gennes model for uniform magnetic fields. When the edge is non-flat and has symmetries, we observe an interesting  tunneling effect.
\subsection{A new model: flat  edge}\label{sec:FlatEdge}
Let us recall the model  in $\mathbb R^2$ where $B=\textrm{ curl\,}\mathbf A=\mathbf 1_{\mathbb R_+\times\mathbb R }+a\mathbf 1_{\mathbb R_-\times\mathbb R}$ and $a\in[-1,0)$ is  a  fixed constant\footnote{It is important for us to have $a<0$, because in the opposite case, $a \in (0,1)$, $\mu_a(\xi)$ defined in \eqref{mu_a_1} becomes a monotone increasing function with $\inf_{\xi \in{\mathbb R}} \mu_a(\xi) = a$.  This implies that the magnetic step will no longer attract the ground state, i.e. we do not expect localization near the magnetic step in this case.
}. We get in this case a family of Schr\"{o}dinger operators \cite{HPRS16}
\begin{equation}\label{eq:ha}
\mathfrak h_a[\xi]=-\frac{d^2}{d\tau^2}+V_a(\xi,\tau),
\end{equation}
on $L^2(\mathbb R)$, where $\xi\in\mathbb R$ is a parameter and 
\begin{equation}\label{eq:potential}
V_a(\xi,\tau)=\big(\xi+b_a(\tau)\tau\big)^2,\quad b_a(\tau)=\mathbf{1}_{\mathbb R_+}(\tau)+a\mathbf{1}_{\mathbb R_-}(\tau)\,.
\end{equation}
We introduce the ground state energy   of $\mathfrak h_a[\xi]$, 
\begin{equation}\label{mu_a_1}
\mu_a(\xi)=\inf_{u\in B^1(\mathbb R),u\neq0} \frac{
\|u'\|_{L^2(\R)}^2+\|\sqrt{V_a}\, u\|^2_{L^2(\R)}}
{\|u\|^2_{L^2(\mathbb R)}}\,,
\end{equation}
along with the following constant 
\begin{equation}\label{eq:beta}
\beta_a:=\inf_{\xi \in \mathbb R} \mu_a(\xi)=\mu_a(\zeta_a)\,,
\end{equation}
where $\zeta_a<0$, is the unique minimum of $\mu_a(\cdot)$.  Let $\phi_a$ be the \emph{positive} and $L^2$-normalized ground state of $\mathfrak  h_a[\zeta_a]$. We have \cite{AK20}
\begin{equation}\label{eq:c2}
c_2(a):=\frac12\mu_a''(\zeta_a)>0
\end{equation}
and 
\begin{equation}\label{eq:beta*}
 |a|\Theta_0< \beta_a<\min(|a|,\Theta_0),\quad \phi_a'(0)<0\quad (-1<a<0)\,. 
 \end{equation}
For $a=-1$, we have    by  a symmetry argument
\begin{equation}\label{eq:deG}
\beta_{-1} = \Theta_0\,,\quad \zeta_{-1}=\xi_0\,,\quad  \phi_{-1}(\tau)=u_0(|\tau|)\,,
\end{equation}
thereby returning to the  de\,Gennes model introduced in Sec.~\ref{sec:dG}.

Later on, the following negative constant will be of particular interest,
\begin{equation}\label{eq:m3}
M_3(a)=\frac 13\Big(\frac 1a-1\Big)\zeta_a\phi_a(0)\phi_a'(0) <0.
\end{equation}

\subsection{Curved edge and single well}\label{sec:Edge}
We return to the operator $\mathcal P_h$ in \eqref{eq:P}.  Here and in the  rest of the paper, we will work under the  following assumption\footnote{Our results are likely to hold when  $\Gamma$ is $C^N$ smooth for some integer $N\geq 1$.  We  impose  the $C^\infty$  hypothesis since we use psudo-differential calculus and sought errors of order $\mathcal O(h^\infty)$.}
\begin{equation}\label{eq:Gam}
\left\{\,\begin{aligned}
&\Omega_1\subset\mathbb R^2\text{ is a }\text{simply connected open set,}~\Omega_2=\mathbb R^2\setminus\overline{\Omega}_1,\\
&\Gamma:=\partial\Omega_1 \text{ is a }C^\infty\text{ smooth closed curve.}\\
\end{aligned}\,\right\}
\end{equation}
and that  the magnetic field is a step function (see Fig.~\ref{fig1})
\begin{figure*}[t]
\includegraphics[width=6cm]{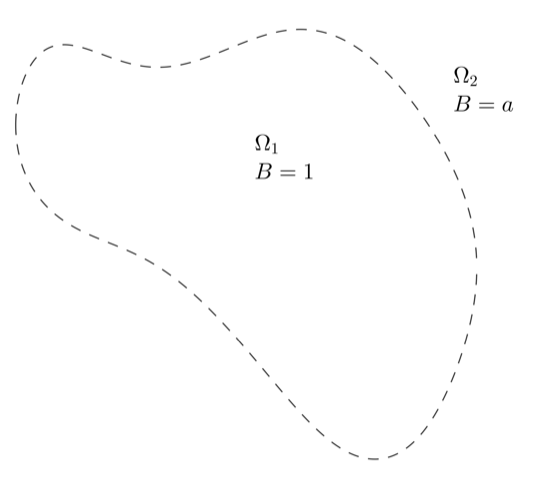}
\caption{The plane  $\mathbb R^2=\Omega_1\cup\Omega_2\cup\Gamma$ with  the  non symmetric edge  $\Gamma=\partial\Omega_1$ dashed.}\label{fig1}
\end{figure*}
\begin{equation}\label{eq:B-ms}
B=\mathbf 1_{\Omega_1}+a\mathbf 1_{\Omega_2}\quad\textrm{ where~}-1< a<0\,.
\end{equation}
The operator  $\mathcal P_h$ is then self-adjoint in $L^2(\mathbb  R^2)$ with domain\footnote{Since $\mathbf A\in H^1_{\rm loc}(\R^2)$,  there is no jump across  $\Gamma$ of $u$ and $\mathbf n\cdot(h\nabla-i\mathbf  A)u$,  $\forall u\in{\rm Dom}(\mathcal P_h)$. } 
\begin{equation}\label{eq:DomP}
\textsf{ Dom}(\mathcal P_{h})=\{u\in L^2(\mathbb R^2)~:~(h\nabla-i\mathbf A)^{j}u\in  L^2(\mathbb R^2),~j=1,2\}.
\end{equation}
By Persson's lemma \cite{P}, the essential spectrum  of $\mathcal P_h$ is determined by the magnetic field at infinity (in our case it is equal to $a$),  so  
\[ \inf \sigma_\textrm{ ess}(\mathcal P_h)=|a| h\,.\]
Since $\beta_a<\min(|a|,\Theta_0)$, bound states of $\mathcal P_h$  are localized near the edge \cite{AK20}. More precisely,  for every $n\in\mathbb N$, there exist constants  $\alpha,h_0,C_n>0$ such that,
\begin{equation}\label{eq:dec-norm}
\int_{\mathbb R^2}\big(|u_{h,n}|^2+h^{-1}|(h\nabla-i\mathbf A)u_{h,n}|^2 \big)\exp\Big(\frac{\alpha\, \textrm{  dist}(x,\Gamma)}{h^{1/2}} \Big)dx\leq C_n,
\end{equation}
for all $h\in (0,h_0]$, where $u_{h,n}$ is a normalized eigenfunction associated to the $n$'th eigenvalue of $\mathcal  P_h$.

\begin{remark}[The case of bounded domains]
We can also consider the Dirichlet or Neumann realizations of $\mathcal P_h$ in a bounded smooth domain $\Omega$, in which case the spectrum is purely discrete.
Related to our setting is \cite[Thm.~1.2]{AHK} dealing with a somehow different geometric condition, where the operator  $\mathcal  P_h$ is considered in $L^2(\Omega)$ with  Dirichlet boundary condition, $\Omega_1\subset \Omega$ and $\Gamma$ a  smooth curve that meets $\partial\Omega$ transversely, see Fig.~\ref{fig3.2}. However, the proofs are not altered by considering the new setting of $\mathcal P_h$ above ($\mathcal P_h$ in the full plane and closed curve  $\Gamma$).  The main reason is that the property  $\beta_a<|a|$  for $-1<a<0 $ ensures   the localization of the bound states   near the edge $\Gamma$.
\end{remark}

So the following  result essentially follows from \cite[Thm.~1.2]{AHK}:
\begin{theorem}\label{thm:AHK}
Assume that \eqref{eq:Gam} and \eqref{eq:B-ms} hold and that the curvature $k$ of $\Gamma$ has  a unique non-degenerate  maximum, i.e.
\[k_{\max}:=\max_{\Gamma} k(s)=k(0)\quad\textrm{ with}\quad  k_2:=k''(0)<0\]
Then, for all $n\in\mathbb N^*$  the $n$-th eigenvalue $\lambda_n(h)$ of $\mathcal P_{h}$,  defined in~\eqref{eq:P}, satisfies as $h\rightarrow 0$, 
\[\lambda_n(h)= \beta_ah+k_{\max} M_3(a)h^\frac 32+(2n-1)\sqrt{\frac{{ k_2}M_3(a)c_2(a)}{2}}h^\frac 74+\mathcal O(h^{\frac{15}8}),\]
where $\beta_a$,   $c_2(a)$  and $M_3(a)$ are  introduced in \eqref{eq:beta},  \eqref{eq:c2} and  \eqref{eq:m3} respectively.
\end{theorem}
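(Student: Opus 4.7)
The plan is to adapt the argument of \cite[Thm.~1.2]{AHK}, with only cosmetic modifications required by the change of geometric setting (closed edge in $\mathbb{R}^2$ versus an arc transverse to a Dirichlet boundary in \cite{AHK}). First, using the exponential decay in \eqref{eq:dec-norm}, the spectral analysis is localised, up to an exponentially small error, to a tubular neighborhood of $\Gamma$ of width $\mathcal{O}(h^{1/2})$. In such a neighborhood, I would introduce the Frenet coordinates $(s,t)$ with $t>0$ in $\Omega_1$, together with a convenient gauge, so that $\mathcal{P}_h$ transforms into an operator on a weighted $L^2$ space whose leading part is the flat-edge operator $\mathfrak{h}_a$ of Section~\ref{sec:FlatEdge}, with curvature-dependent lower-order corrections organised in powers of $h^{1/4}$.

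Next, I would implement the two-scale rescaling $\tau = t/h^{1/2}$, $\sigma = s/h^{1/4}$ together with the gauge factor $e^{i\zeta_a s/h^{1/2}}$ that implements the Fourier dual $\xi \mapsto \zeta_a$. The leading operator is $h\,\mathfrak{h}_a[\zeta_a]$, contributing the eigenvalue $h\beta_a$ on the one-dimensional subspace $\mathbb{C}\phi_a(\tau)$. The first correction, of size $h^{3/2}$, carries the curvature $k(s)$ as a prefactor; its Feshbach--Grushin reduction onto $\mathbb{C}\phi_a$ produces the coefficient $M_3(a)$ of \eqref{eq:m3} by a direct computation involving boundary values of $\phi_a$ at $\tau=0$. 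Taylor-expanding $k(s) = k_{\max} + \tfrac12 k_2 s^2 + \mathcal{O}(s^3)$ at the unique non-degenerate maximum, the residual tangential operator becomes a harmonic oscillator in $\sigma$ whose $n$'th eigenvalue supplies the term $(2n-1)\sqrt{k_2 M_3(a) c_2(a)/2}\,h^{7/4}$; reality of this quantity follows from $M_3(a)<0$, $k_2<0$ and $c_2(a)>0$.

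The upper bound for $\lambda_n(h)$ is obtained by inserting explicit quasi-modes of product form $e^{i\zeta_a s/h^{1/2}} f_n(s/h^{1/4})\phi_a(t/h^{1/2})$, with $f_n$ a rescaled Hermite function centred at $s=0$, into the quadratic form and verifying that the residual energy is $\mathcal{O}(h^{15/8})$. The matching lower bound is the hard step: in addition to the normal decay recorded in \eqref{eq:dec-norm}, it requires a tangential Agmon estimate localising bound states near the maximum of $k$, together with a Grushin reduction of $\mathcal{P}_h$ to a scalar effective operator on $L^2(\mathbb{R}_s)$; commutators arising from the $s$-dependence of the normal projector onto $\phi_a$ and the curvature-induced metric factor must be controlled uniformly in $h$. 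This is exactly the content of \cite[Sec.~4--6]{AHK}. It remains only to observe that the passage from the geometry of \cite{AHK} to ours affects none of those arguments, since they are all performed in a small neighborhood of the maximum of $k$ and rely only on the smoothness of $\Gamma$ together with the strict inequality $\beta_a<|a|$ that, via \eqref{eq:dec-norm}, confines bound states to the edge.
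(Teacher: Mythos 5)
Your proposal matches the paper's own treatment: the paper gives no independent proof but notes that Theorem~\ref{thm:AHK} essentially follows from \cite[Thm.~1.2]{AHK}, since the change of geometric setting (a closed edge in the full plane versus an arc meeting a Dirichlet boundary transversely) does not affect the argument thanks to the localization near $\Gamma$ guaranteed by $\beta_a<|a|$ and \eqref{eq:dec-norm}. One small inaccuracy in your sketch: the tangential rescaling/localization scale is $s\sim h^{1/8}$ (so $\sigma=s/h^{1/8}$), consistent with \eqref{eq:dec-uh}, not $h^{1/4}$; this does not affect the soundness of the strategy.
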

Looking more closely at Theorem~\ref{thm:AHK}, we observe that the third  term in  the expansion of $\lambda_n(h)$ is effectively given (up to the factor of $h^{3/2}$) by the $n$-th eigenvalue of  the  following 1D operator  on $L^2\big(\mathbb R /(2L\mathbb Z)\big)$,
\begin{equation}\label{eq:eff-op}
\mathfrak L_h^\textrm{ eff}=\frac{\mu_a''(\zeta_a)}{2}\left(-h^{\frac 12} \partial_s^2+V_a(s)\right)\,,\quad V_a(s)=\frac{2M_3(a)(k(s)-k_{\max})}{\mu''_a(\zeta_a)},
\end{equation}
where $L=|\Gamma|/2$ and $|\Gamma|$  denotes the arc-length of $\Gamma$.  
Notice, that $V_a\geq 0$, due to the sign of $M_3(a)$ (see \eqref{eq:m3}).
This point of view is important in order to discuss the case  where $\Gamma$ has symmetries and the splitting of the  eigenvalues is no more of  fractional order in $h$.

In the presence of  several points of maximal curvature, a variant of Theorem~\ref{thm:AHK} continues to hold but we may loose the information on the simplicity of the eigenvalues, exactly in the same manner observed for the Neumann problem (see \eqref{eq:mult-well}).

\subsection{Symmetric edge and tunneling}\label{sec:SymEdge}

Suppose that,   in addition to \eqref{eq:Gam} and  \eqref{eq:B-ms},  the following holds (see Fig~\ref{fig2}): 
\begin{assumption}\label{ass:sym}~
\begin{enumerate}[\rmfamily i)]
	\item $\Omega_1$ is symmetric with respect to the $y$-axis.
	\item The curvature $k$ on  $\Gamma$ attains its maximum at exactly two symmetric points $a_1=(a_{1,1},a_{1,2})$ and $a_2=(a_{2,1},a_{2,2})$ with $ a_{1,1}<0$ and  $ a_{2,1}>0$.
	\item  Denoting by $s_r$ and $s_\ell$ the arc-length  coordinates of $a_1$ and $a_2$ respectively, we have $k''(s_r)=k''(s_\ell)<0$.
\end{enumerate}
\end{assumption}
This is  exactly the same geometric assumption on $\Omega$ as Assumption~\ref{ass:symN}  for the Neumann realization in $L^2(\Omega)$, with the edge $\Gamma$ playing the  role of  $\Sigma$, the boundary of $\Omega$.

The presence of a symmetric edge yields a symmetric potential, and  consequently two wells, in  the effective operator introduced in \eqref{eq:eff-op},  which in turn will induce a tunneling effect whose order of magnitude can be measured   by the following quantities (similarly to what we have seen in Theorem~\ref{thm:BHR}):
\begin{equation}\label{eq.Aud-a}
\begin{aligned}
\mathsf{A}_{\mathsf{u}}^a&=\exp\left(-\int_{[s_{r}, 0]} \frac{ (V_a^\frac 12 )' (s)+g_a}{ \sqrt{V_a(s)}} ds\right)\,,\\
\mathsf{A}_{\mathsf{d}}^a&=\exp\left(-\int_{[s_{\mathsf{\ell}}, L]} \frac{ (V_a^\frac 12 )' (s) -g_a}{ \sqrt{V(s)}} ds\right)\,,\\
g_a&=\left(V_a''(s_{r})/2\right)^\frac 12=\left(V_a''(s_{\mathsf{\ell}})/2\right)^\frac 12\,.
\end{aligned}
\end{equation}
Up to leading order, the operator  in \eqref{eq:eff-op} continues to be effective  under the new assumptions on the edge, modulo additional terms related to  the circulation of the magnetic field   and the geometry.   
\begin{theorem}\label{thm:FHK}
Suppose that  Assumption~\ref{ass:sym} holds in addition to \eqref{eq:Gam} and \eqref{eq:B-ms}. The first and second eigenvalues of  $\mathcal P_h$ satisfy as $h\to0_+$,
\[\lambda_2(h)-\lambda_1(h)=2|w_a(h)|+o(h^{\frac{13}{8}}e^{-\mathsf{S}^a/h^{\frac 14}})\,,\]
where: 
\begin{align*}
 w_a(h)&= \mu_a''(\zeta_a) h^{\frac{13}{8}} \pi^{-\frac 12} g^{\frac12}_a \\
&\quad\times
\left(\mathsf{A}_{\mathsf{u}}^a \sqrt{V_a(0)}e^{- \mathsf{S}_{\mathsf{u}}^a/h^{1/4}} e^{iL f_a(h)}+\mathsf{A}_{\mathsf{d}}^a \sqrt{V_a(L)}e^{- \mathsf{S}_{\mathsf{d}}^a/h^{1/4}} e^{-iLf_a(h)}\right)\,,
\end{align*}
with $\mu_a$ and $\zeta_a$ introduced in Section \ref{sec:FlatEdge},
 and
\begin{enumerate}[\rmfamily i.]
\item The potential $V_a$ is introduced in \eqref{eq:eff-op};
\item  $\mathsf S^a$ is  the Agmon distance between the wells,
\begin{equation}\label{defSa}
\mathsf{S}^a =\min \left(\mathsf{S}_{\mathsf{u}}^a,\mathsf{S}_{\mathsf{d}}^a\right),~  \mathsf{S}_{\mathsf{u}}^a=\int_{[s_{r},s_{\mathsf{\ell}}] } \sqrt{V_a(s)} \,ds,~  \mathsf{S}_{\mathsf{d}}^a=\int_{[s_{\mathsf{\ell}}, s_{\mathsf{r}}] } \sqrt{V_a(s)} \,ds\,;
\end{equation}
\item $\mathsf{A}_{\mathsf{u}}^a$, $\mathsf{A}_{\mathsf{d}}^a$ and $g_a$ are defined in \eqref{eq.Aud-a};
\item $f_a(h)=\gamma_0/h+\zeta_a/h^{1/2}-\alpha_0(a)$ with 
\begin{equation}\label{eq:circ}
\gamma_0=\frac{|\Omega_1|}{|\Gamma|},
\end{equation}
and $\alpha_0(a)$ is a constant dependent on $a$ and $\Omega_1$.
\end{enumerate}
\end{theorem}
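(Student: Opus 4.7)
The proof follows the Helffer--Sj\"{o}strand interaction-matrix strategy carried out for the Neumann problem in \cite{BHR21}, with the magnetic-step fiber operator $\mathfrak h_a[\xi]$ of Section~\ref{sec:FlatEdge} replacing the de~Gennes operator and with the edge $\Gamma$ playing the role of the boundary. First I would pass to Frenet coordinates $(s,t)$ in a tubular neighborhood $\{|t|<\epsilon_0\}$ of $\Gamma$ as in Section~\ref{sec:curv}, choose a gauge in which the normal component of the magnetic potential vanishes, and expand $\mathcal P_h$ in powers of $t$. After the rescaling $t=h^{1/2}\tau$ this produces, at leading order, the transverse operator $\mathfrak h_a[\xi]$ together with curvature-dependent subprincipal terms whose $h^{3/2}$ and $h^{7/4}$ coefficients recover the expansion of Theorem~\ref{thm:AHK}. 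Combined with the normal-direction localization \eqref{eq:dec-norm}, tangential cut-offs reduce the spectral problem for $\lambda_1(h),\lambda_2(h)$ to a tube modulo exponentially small errors.

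Second, for each of the wells $s_\bullet\in\{s_\ell,s_r\}$ I would freeze the opposite well by inserting an Agmon-type barrier and build, as in Theorem~\ref{thm:WKB}, a quasi-mode
\[
\Psi_{h,\bullet}(s,t)\sim e^{i\theta_{h,\bullet}(s)}\,e^{-\Phi_\bullet(s)/h^{1/4}}\,f_{0,\bullet}(s)\,\phi_a(h^{-1/2}t),\qquad \bullet\in\{\ell,r\}.
\]
The eikonal $\Phi_\bullet$ is determined from the effective potential $V_a$ of \eqref{eq:eff-op} by $(\Phi_\bullet')^2=V_a$, the amplitude $f_{0,\bullet}$ solves a first-order transport equation read off from the $h^{15/8}$-coefficient of the expansion of $\mathcal P_h$ (mirroring \cite{BHR15,BHR21}), and the phase $\theta_{h,\bullet}$ is tuned so that the large circulation of $\mathbf A$ and the optimal fiber frequency $\zeta_a$ are absorbed. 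It is precisely this tuning that produces the geometric phase $e^{\pm iLf_a(h)}$ appearing in the statement: the term $\gamma_0/h$ comes from the magnetic flux $\int_{\Omega_1}B$ evaluated via Stokes' theorem in the Frenet parametrization, the term $\zeta_a/h^{1/2}$ from the Fourier conjugation placing $\mathfrak h_a[\xi]$ at $\xi=\zeta_a$, and $\alpha_0(a)$ absorbs an $O(1)$ geometric/spectral contribution.

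Third, I would establish the optimal tangential Agmon estimate
\[
\int_{\mathrm{dist}(x,\Gamma)<\epsilon_0}\bigl(|u_{h,n}(x)|^2+h^{-1}|(h\nabla-i\mathbf A)u_{h,n}|^2\bigr)e^{2\Phi(s(x))/h^{1/4}}\,dx=\mathcal O(1),
\]
where $\Phi$ is the Agmon distance to $\{s_\ell,s_r\}$ for the metric $V_a(s)\,ds^2$; this is the magnetic-step analog of the central decay estimate of \cite{BHR21} and is the purpose of Section~\ref{sec:dec}. Once available, the truncations of $\Psi_{h,\bullet}$ are quasi-modes with error $o(e^{-\mathsf S^a/h^{1/4}})$ in energy, so the projections $g_{h,\bullet}=\Pi\Psi_{h,\bullet}$ agree with $\Psi_{h,\bullet}$ to the same order. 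The Gram--Schmidt orthonormalization and the computation of the off-diagonal entry of the interaction matrix $\widehat{\mathsf M}_h$ then proceed exactly as in \cite[\S 7]{BHR21}: the matrix element decomposes into an upper and a lower tunneling contribution,
\[
\widetilde w_a(h)=\mu_a''(\zeta_a)\,h^{13/8}\pi^{-1/2}g_a^{1/2}\Bigl(\mathsf A_{\mathsf u}^a\sqrt{V_a(0)}\,e^{-\mathsf S_{\mathsf u}^a/h^{1/4}}e^{iLf_a(h)}+\mathsf A_{\mathsf d}^a\sqrt{V_a(L)}\,e^{-\mathsf S_{\mathsf d}^a/h^{1/4}}e^{-iLf_a(h)}\Bigr),
\]
and diagonalization of the resulting $2\times 2$ Hermitian matrix, whose off-diagonal entry is $\widetilde w_a(h)$, yields $\lambda_2(h)-\lambda_1(h)=2|\widetilde w_a(h)|$ with the announced remainder.

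The main obstacle is clearly step three: producing tangential decay at the fine scale $e^{-\Phi/h^{1/4}}$ rather than the softer quadratic decay of \eqref{eq:dec-uh}-type estimates. In the uniform-field Neumann case this was the technical breakthrough of \cite{BHR21}, relying on a delicate Agmon inequality for the tangential effective operator obtained after freezing the fiber at its ground state $u_0$. Transplanting this to the magnetic step brings the additional difficulty that $\phi_a$ is only $H^1$ across $\tau=0$ and that the subprincipal expansion of $\mathcal P_h$ involves the jump $\phi_a'(0)$ through $M_3(a)$; however, the positivity of $c_2(a)=\tfrac12\mu_a''(\zeta_a)$ recorded in \eqref{eq:c2} and the sign $M_3(a)<0$ from \eqref{eq:m3} are exactly what is needed to reproduce the BHR21 Agmon scheme here, after which the interaction-matrix computation is essentially identical to that of \cite{BHR21} and will only be sketched in Section~\ref{sec:IM}.
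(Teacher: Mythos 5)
Your proposal mirrors the paper's four-step strategy precisely --- Fr\'enet reduction (Section~\ref{sec:curv}), single-well WKB construction (Section~\ref{sec:WKB}), the optimal tangential Agmon estimate via the operator-valued pseudo-differential calculus of Bonnaillie-No\"{e}l--H\'erau--Raymond (Section~\ref{sec:dec}), and the interaction-matrix computation (Section~\ref{sec:IM}) --- and correctly identifies the tangential decay as the key technical step. Two minor slips, neither affecting the validity of the outline: the transport equation for $f_0$ comes from the solvability condition at order $h^{7/4}$ (the $\hbar^{3/2}$ coefficient with $\hbar=h^{1/2}$), not $h^{15/8}$, and $\phi_a$ is in fact $H^2$ across $\tau=0$ (only $\phi_a''$ jumps), consistent with the space $\widehat{\mathcal S}(\mathbb R)$ in \eqref{eq:hat-S}.
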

Theorem~\ref{thm:FHK} is the analogue of Theorem~\ref{thm:BHR} but for the situation where tunneling is due to the  discontinuity of the magnetic field (without the need for imposing a Neumann boundary condition). 
As in the proof of Theorem~\ref{thm:BHR} in \cite{BHR21}, the proof of Theorem~\ref{thm:FHK} relies on an optimal  tangential decay estimate of  ground states.

\subsection{Bounded domains}\label{sec:extensions}
Theorem~\ref{thm:FHK} continues to hold if we consider the Dirichlet or Neumann realization of the operator $\mathcal P_h$ in $L^2(\Omega)$, where $\Omega$ is a domain with a $C^2$ boundary such that $\overline{\Omega}_1\subset\Omega$ (see Fig~\ref{fig3.1}). Thanks to \eqref{eq:beta*}, bound states of $\mathcal P_h$ are  localized near $\Gamma=\partial\Omega_1$, and the proof of Theorem~\ref{thm:FHK} is not altered.
\begin{figure*}[t]
\includegraphics[width=7cm]{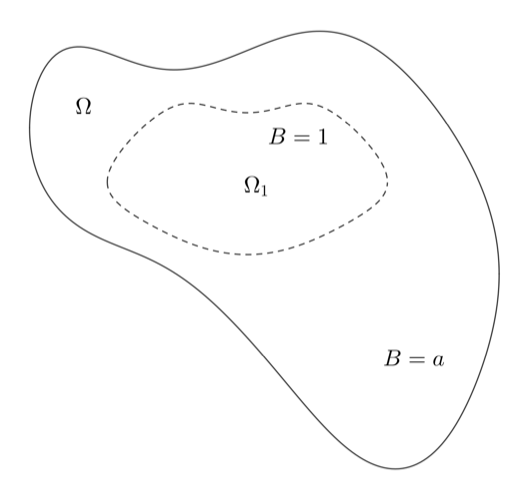}
\caption{The domain $\Omega$ is split into two parts with the edge $\Gamma$ (dashed) is a closed curve.}\label{fig3.1}
\end{figure*}
\begin{figure*}[t]
\includegraphics[width=8cm]{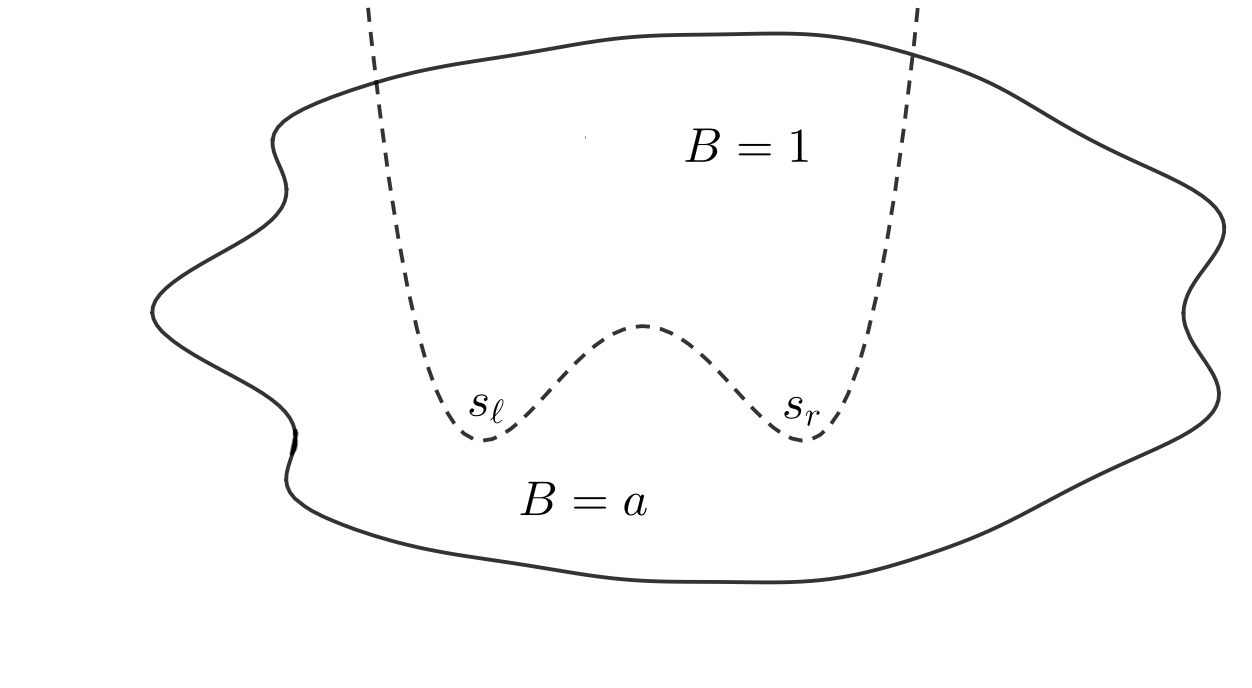}
\caption{The edge  $\Gamma$ (dashed) splits the domain $\Omega$ into two parts and intersects the boundary $\partial\Omega$ transversely.}\label{fig3.2}
\end{figure*}
We can also modify the configuration of our domains  $\Omega_1$ and $\Omega_2$ in \eqref{eq:Gam} and still get the tunneling effect but without oscillatory terms. Let $\Omega$ be a domain with a $C^1$ boundary such that $\overline{\Omega}=\overline{\Omega}_1\cup\overline{\Omega}_2$, where  $\Omega_1$ and $\Omega_2$ are disjoint simply  connected  open sets. We consider a magnetic field as in \eqref{eq:B-ms} and notice that   the edge $\Gamma=\Omega\cap\partial\Omega_1=\Omega\cap\partial\Omega_2$ (see Fig~\ref{fig3.2}). We assume that $\Gamma$  is a smooth curve and consider the Dirichlet\footnote{The Neumann realization leads to a completely different behavior, reminiscent of domains with corners \cite{A20}.} realization  of $\mathcal P_h$ in $L^2(\Omega)$. This is the situation considered in \cite{AHK}.
Now we assume that the curvature $k$  along $\Gamma$ has a  non-degenerate maximum attained at two points, with arc-length coordinates $s_\ell<0$ and $s_r=-s_\ell$, and that it is an even function in a neighborhood of $[s_\ell,s_r]$. In this situation, the splitting  between the first eigenvalues is given as follows:
\[\lambda_2(h)-\lambda_1(h)=2|w_a(h)|+o(h^{\frac{13}{8}}e^{-\mathsf{S}^a/h^{\frac 14}}),\]
where
\[w_a(h)= 2\mu_a''(\zeta_a) h^{\frac{13}{8}} \pi^{-\frac 12} g^{\frac12}_a
\mathsf{A}_a \sqrt{V_a(0)}\,e^{- \mathsf{S}_a/h^{1/4}},
 \]
 and
 \[\mathsf A_a=2\exp\left(-\int_{[ s_\ell,s_r]} \frac{ (V_a^\frac 12 )' (s)-g_a}{ \sqrt{V_a(s)}} ds\right),  \quad\mathsf S_a=\int_{[s_\ell,s_r]}\sqrt{V(s)}ds. \]

\section{Reduction to a neighborhood of the  edge}\label{sec:curv}
It will be convenient to work in Fr\'enet coordinates, $(s,t)$, along the edge $\Gamma$, valid in a neighborhood of $\Gamma$ of the form
\begin{equation}\label{eq:gam-ep}
\Gamma(\epsilon)=\{x\in\mathbb R^2~:~\textrm{ dist}(x,\Gamma)<\epsilon\}\quad(\epsilon>0)\,.
\end{equation}
Let us briefly recall these coordinates. Consider an arc-length parameterization  of $\Gamma$ , $M:(-L,L]\to\Gamma$, so that (see Assumption~\ref{ass:sym})
\[M(s_\ell)=a_{1},\quad M(s_r)=a_2,\quad  0<s_\ell<L,\quad -L<s_r<0\,, \]
and
\[\Gamma\cap\{(x,y)\in\mathbb R^2\,|\,~x=0\}=\{M(0)=:(0,y_0),M(L)=:(0,y_L)\} \quad\textrm{ with~} y_0>y_L\,. \]
Let $\mathbf n(s)$ be the unit normal to $\Gamma$ pointing inward to $\Omega_1$ (see { Fig.}~\ref{fig2}), $\mathbf t(s)=\dot{\mathbf n}(s)$ the  unit oriented tangent, so that $\textrm{ det}(\mathbf t(s),\mathbf n(s))=1$.  Let us represent the torus $\mathbb R/ 2L \mathbb Z$ by the interval $(-L,L]$.  We can pick $\epsilon_0>0$ such that 
\[\Phi: \mathbb R/ (2L \mathbb Z)\times (-\epsilon_0,\epsilon_0)\ni (s,t)\mapsto M(s)+t\mathbf n(s)\in\Gamma(\epsilon_0) \]
is a diffeomorphism whose Jacobian  is
\[\mathfrak a(s,t)=1-tk(s)\,, \]
with $k(s)$  the curvature at $M(s)$, defined by $\ddot{\mathbf n}(s)=k(s)\mathbf n(s)$.
The Hilbert space $L^2(\Gamma(\epsilon_0))$ is transformed to the weighted space  
\[L^2\big(\mathbb R/ 2L \mathbb Z\times(-\epsilon_0,\epsilon_0);\mathfrak a\,dsdt\big)\] and the  operator $\mathcal P_h$ is transformed into  the  following operator (after a gauge transformation $(u,\mathbf A)\to (v=e^{i\phi/h}u,\mathbf A'=\mathbf A-\nabla \phi)$ to eliminate the  normal component of $\mathbf A$, see \cite[App.~F]{FH10}):
\begin{multline*}
 \tilde{\mathcal P}_h:=-h^2\mathfrak a^{-1}\partial_t \mathfrak a\partial_t\\+\mathfrak a^{-1}\left(-ih\partial_s+\gamma_0-b_a(t)t+\frac{k}{2}b_a(t)t^2\right)\mathfrak a^{-1}\left(-ih\partial_s+\gamma_0-b_a(t)t+\frac{k}{2}b_a(t)t^2\right)
 \end{multline*}
where  $b_a(t)$ is introduced in \eqref{eq:potential} and $\gamma_0$ is the circulation introduced in \eqref{eq:circ}.

Following the presentation  of  \cite{BHR21}  (see also references therein),  it is convenient to introduce a truncated version of the  operator $\tilde P_h$ so that it can be defined on $\mathbb R/ 2L \mathbb Z\times\mathbb R$ instead of $\mathbb R/ 2L \mathbb Z\times(-\epsilon_0,\epsilon_0)$. This will be useful when rescaling the $t$ variable. What is handy in this situation is that the actual bound states of the operator $\mathcal P_h$ decay exponentially away from the edge, at  the length scale $\hbar:=h^{1/2}$, see \eqref{eq:dec-norm}. This  motivates the change of variables, $t=\hbar \tau$ and $s=\sigma$, that will allow the same spectral reduction as in \cite[Prop.~2.7]{BHR21}. We will skip the details which are  the same as in  \cite{BHR21}.

From now on we set
\begin{equation}\label{eq:mu(h)}
 \mu=h^{\frac14+\eta} \text{ for a fixed } \eta\in(0,\frac14)
\end{equation}
and we introduce the function
\begin{equation}\label{eq:c-delta}
c_\mu(\tau)= c(\mu\tau)\,,
\end{equation}
where $c\in C_c^\infty(\mathbb R)$ satisfies $c=1$ on $[-1,1]$  and $c=0$ on $\mathbb R\setminus(-2,2)$.
Consider the  new weight term
\[\tilde{\mathfrak a}_h(\sigma,\tau)=1-h^{1/2} c_\mu(\tau)\tau k(\sigma)\,,\]
and the self-adjoint  operator $\tilde{\mathcal N}_{h}$  on the Hilbert space $ L^2(\mathbb R/ 2L \mathbb Z\times\mathbb R;\tilde{\mathfrak a}_h  d\sigma d\tau)$,
\begin{align}\label{eq:tilde-Nh}
\tilde{\mathcal N}_{h}&=-\tilde{\mathfrak a}^{-1}_{h}\partial_\tau \mathfrak a_{\hbar}\partial_\tau \nonumber \\
&\quad+\tilde{\mathfrak a}^{-1}_{h}\left(-ih^{1/2}\partial_\sigma+{ h^{-1/2}\gamma_0}-b_a\tau +h^{1/2} c_\mu \frac{k}{2}b_a\tau^2\right)\nonumber \\
&\quad\quad \times  \tilde{\mathfrak a}^{-1}_{h}\left(-ih^{1/2}\partial_\sigma+{ h^{-1/2}\gamma_0}-b_a\tau+h^{1/2} c_\mu\frac{k}{2}b_a\tau^2\right),
\end{align}
with domain
\begin{align*}
{\textsf{ Dom}(\tilde{\mathcal N}_h)}=\{ u\in L^2(\mathbb R/ 2L \mathbb Z\times\mathbb R)& ~\vert~ \partial_\tau^2u\in L^2({ \mathbb R/ 2L \mathbb Z\times\mathbb R)}, \nonumber \\
& (-ih^{1/2}\partial_\sigma+{ h^{-1/2}\gamma_0}-b_a\tau )^2 u \in  L^2(\mathbb R/ 2L \mathbb Z\times\mathbb R)   \}.
\end{align*}
We have now  the following spectral reduction\footnote{The eigenvlaues of the operator $\tilde{\mathcal N}_h$ depend on $\eta$ in \eqref{eq:mu(h)}. However,  the estimates in Proposition~\ref{prop:BHR2.7} hold uniformly with respect  to $\eta\in(0,\epsilon)$ for any fixed $\epsilon\in(0,\frac14)$. }:
\begin{proposition}\label{prop:BHR2.7}
Let $a\in(-1,0)$ and $\mathsf S^a$ be  the Agmon distance introduced in  \eqref{defSa}. There exist $K>\mathsf{S}^a$, $C,h_0>0$ such that, for all $h\in(0,h_0)$, we have 
\[\lambda_n(h)-Ce^{-K/h^{\frac 14}}\,\leq  h\lambda_n(\tilde{\mathcal N}_h)\leq \lambda_n(h)+Ce^{-K/h^{\frac 14}}\,,\]
where    $\lambda_n(h)$ and $\lambda_n(\tilde{\mathcal N}_h)$ are the $n$-th (min-max) eigenvalues of the operators $\mathcal P_h$ and $\tilde{\mathcal N}_{h}$ respectively.
\end{proposition}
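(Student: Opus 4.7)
The plan is to follow the same pattern as the proof of \cite[Prop.~2.7]{BHR21}: deduce both inequalities from the min-max principle by transferring quasi-modes in each direction, with errors that turn out to be much smaller than $e^{-K/h^{1/4}}$. Two localization tools will be needed. First, the normal Agmon estimate \eqref{eq:dec-norm} for the eigenfunctions of $\mathcal P_h$, which concentrates them on scale $h^{1/2}$ around $\Gamma$. Second, the analogous normal Agmon estimate for $\tilde{\mathcal N}_h$ on scale $O(1)$ in $\tau$ --- a standard Persson-type consequence of $\beta_a < |a|$ applied to the limiting $\tau$-operator $\mathfrak h_a[\xi]$. The change of variables $\Phi$ to Fr\'enet coordinates and the gauge transformation eliminating the normal component of $\mathbf A$ supply the dictionary between the two operators.

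For the inequality $h\lambda_n(\tilde{\mathcal N}_h)\leq \lambda_n(h) + Ce^{-K/h^{1/4}}$, I would take the first $n$ normalized eigenfunctions $u_{h,1},\dots,u_{h,n}$ of $\mathcal P_h$, multiply them by a smooth cutoff supported in $\Gamma(\epsilon_0)$ --- losing $L^2$-mass of order $e^{-c/h^{1/2}}$ by \eqref{eq:dec-norm} --- re-express them in Fr\'enet coordinates with the gauge adjustment, and perform the rescaling $t=h^{1/2}\tau$. The resulting functions are admissible test functions for $h^{-1}\tilde{\mathcal N}_h$. Because $1/\mu = h^{-1/4-\eta} \ll \epsilon_0 h^{-1/2}$, the cutoff $c_\mu$ hard-wired into $\tilde{\mathcal N}_h$ equals $1$ on the support of the transferred functions in the relevant region, so the curvature corrections are seen in full. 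A direct computation of the Jacobian $\tilde{\mathfrak a}_h$ and of the commutators shows that the shift of each Rayleigh quotient is $O(e^{-c/h^{1/2}})$, and min-max then yields the desired bound.

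For the reverse inequality $\lambda_n(h) \leq h\lambda_n(\tilde{\mathcal N}_h) + Ce^{-K/h^{1/4}}$, I would start from the first $n$ normalized eigenfunctions $\psi_1,\dots,\psi_n$ of $\tilde{\mathcal N}_h$, use their normal Agmon decay in $\tau$ to truncate them to $|\tau|\leq \epsilon_0 /h^{1/2}$, undo the rescaling to recover functions of $(s,t)$ supported in $(-\epsilon_0,\epsilon_0)$, invert the gauge, push forward via $\Phi$, and extend by $0$ outside $\Gamma(\epsilon_0)$ to obtain functions in $\mathrm{Dom}(\mathcal P_h)$. The mass lost in the $\tau$-truncation is exponentially small in $\epsilon_0/h^{1/2}$, the quadratic forms agree modulo the same error, and min-max closes the argument.

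The hard part will not be any single step individually --- each one is a routine change of variables, cutoff, or Jacobian expansion --- but rather to verify that every remainder can be written as $Ce^{-K/h^{1/4}}$ with $K>\mathsf S^a$. This is not an obstacle in practice, because the normal decay operates at the fast scale $h^{1/2}$ whereas the tangential decay controlling $\mathsf S^a$ sits at the slower scale $h^{1/4}$: errors of order $e^{-c/h^{1/2}} = e^{-c h^{-1/4}/h^{1/4}}$ beat $e^{-K/h^{1/4}}$ for any fixed $K$ once $h$ is small enough. Since all the bookkeeping (Jacobian expansion of $\mathfrak a$ and $\tilde{\mathfrak a}_h$, conjugation by the gauge phase, commutation with the cutoffs) is identical to that of \cite[Sec.~2]{BHR21}, I would refer to that source for the fully detailed verification.
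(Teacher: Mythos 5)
Your proposal matches the paper's approach: the authors give essentially no proof, pointing instead to \cite[Prop.~2.7]{BHR21} and noting only that the relevant input is the normal decay estimate \eqref{eq:dec-norm} at scale $h^{1/2}$ and the rescaling $t=h^{1/2}\tau$; the two-sided min-max transfer with cut-off, change of coordinates, gauge, and Jacobian bookkeeping that you describe is exactly that argument. One slip in the bookkeeping is worth fixing: you assert that $1/\mu=h^{-1/4-\eta}\ll \epsilon_0 h^{-1/2}$ implies $c_\mu\equiv 1$ on the support of the transferred test functions, but the inequality goes the wrong way for that conclusion — the set $\{c_\mu=1\}=\{|\tau|\leq 1/\mu\}$ is strictly \emph{smaller} than the support $\{|\tau|\lesssim \epsilon_0 h^{-1/2}\}$, so the operators genuinely differ on an annulus in $\tau$. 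What saves the argument is the $O(1)$-scale exponential decay in $\tau$ of the (rescaled) quasi-modes on either side, which bounds the mass in $\{|\tau|>1/\mu\}$ by $O(e^{-c/\mu})=O(e^{-ch^{-1/4-\eta}})$; since $\eta>0$ this beats $e^{-K/h^{1/4}}$ for any fixed $K$, just as the $\Gamma(\epsilon_0)$-cutoff error $O(e^{-c\epsilon_0/h^{1/2}})$ does. With that correction your argument is complete.
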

Looking at the operator in \eqref{eq:tilde-Nh}, the  effective semi-classical parameter is $\hbar=h^{1/2}$ (this is the parameter appearing in front of $\partial_\sigma$). So with
\begin{equation}\label{eq:delta}
\hbar=h^{\frac12},\quad \mu=\hbar^{\frac12+2\eta} \text{ for a fixed } \eta\in(0,\frac14)\,,
\end{equation}
we introduce the new weight term
\[\mathfrak a_\hbar(\sigma,\tau)=1-\hbar c_\mu(\tau)\tau k(\sigma)\,,\]
and the self-adjoint  operator $\mathcal N_{\hbar}$  on the Hilbert space $ L^2(\mathbb R/ 2L \mathbb Z\times\mathbb R;\mathfrak a_\hbar  d\sigma d\tau)$, which is nothing but the operator in \eqref{eq:tilde-Nh}  but with a change of parameter according to \eqref{eq:delta},
\begin{align}\label{eq:Nh}
\mathcal N_{\hbar}&=-\mathfrak a^{-1}_{\hbar}\partial_\tau \mathfrak a_{\hbar}\partial_\tau\\
&\quad+\mathfrak a^{-1}_{\hbar} \left(-i\hbar\partial_\sigma+\hbar^{-1}\gamma_0-b_a\tau +\hbar c_\mu \frac{k}{2}b_a\tau^2\right)\mathfrak a^{-1}_{\hbar}\nonumber \\
&\qquad \times\left(-i\hbar\partial_\sigma+\hbar^{-1}\gamma_0-b_a\tau+\hbar c_\mu\frac{k}{2}b_a\tau^2\right)\,.\nonumber
\end{align}
The domain of the operator $\mathcal N_{\hbar}$ is 
\begin{align*}
\textsf{ Dom}(\mathcal N_{\hbar})=\{ u\in L^2(\Gamma\times\mathbb R)~|~&\partial_\tau^2u\in  L^2(\mathbb R/ 2L \mathbb Z\times\mathbb R),  \\
&(-i\hbar\partial_\sigma+\hbar^{-1}\gamma_0-b_a\tau )^2 u \in   L^2(\mathbb R/ 2L \mathbb Z\times\mathbb R)   \}.
\end{align*}
With Proposition~\ref{prop:BHR2.7} in hand, it is enough to compute the leading order term of 
$\nu_{2}(\hbar)-\nu_{1}(\hbar)$ to prove Theorem~\ref{thm:FHK}, where, for  $n\geq 1$, we denote by  $\nu_n(\hbar)$  the $n$'th min-max eigenvalue of $\mathcal N_\hbar$. 

\section{Single well and WKB construction}\label{sec:WKB}

We will adjust the edge $\Gamma$ so that we only have a single point of maximum curvature, $s_r$ or $s_\ell$. This procedure will give us two new operators, the ``right well'' and  ``left well'' operators, $\mathcal N_{\hbar,r,\gamma_0}$ and $\mathcal N_{\hbar,\ell,\gamma_0}$ respectively. The same procedure appears,  for similar problems in the context of geometrically induced  tunneling effects \cite{HKR, KR17}, but we follow here \cite[Sec.~2.4]{BHR21} which is slightly different, but more convenient for dealing with the symbol of the operator later on.    

\subsection{Right well operator}

We present the construction for the right well operator,  $\mathcal N_{\hbar,r,\gamma_0}$ and deduce the  other one by symmetry.  Let us fix $\hat\eta$ as follows
\begin{equation}\label{eq:eta.new}
0<\hat\eta<\min\Big(\frac14,\frac{L}4\Big)\quad\textrm{ where~}L=\frac{|\Gamma|}2.
\end{equation} 
First, we identify $\Gamma$ with $(s_\ell-2L,s_\ell]$ (by periodicity and translation of the $s$ variable),  then we extend the curvature $k$ to a function $k_r$ on $\mathbb R$ as follows:
\begin{align}\label{eq:k-r}
k_r&=k\qquad \text{on}\qquad I_{2\hat\eta,r}:=(s_\ell-2L+\hat\eta,s_\ell-\hat\eta),\nonumber \\
 k_r&=0\qquad \text{on}\qquad  (-\infty,s_\ell-2L]\cup {[s_\ell,+\infty)}\,,
\end{align}
and $k_r$ has a unique non-degenerate maximum at $s_r$. Consequently, $k_r$ satisfies \eqref{eq:hyp-gd}.   

We consider now the operator in $L^2(\mathbb R^2;\mathfrak a_{\hbar,r}d\sigma d\tau)$, 
\begin{align}\label{eq:Nh-r}
\mathcal N_{\hbar,r,\gamma_0}&=-\mathfrak a^{-1}_{\hbar,r}\partial_\tau \mathfrak a_{\hbar,r}\partial_\tau\nonumber \\
&\quad +\mathfrak a^{-1}_{\hbar,r}\left(-i\hbar\partial_\sigma+\hbar^{-1}\gamma_0-b_a\tau +\hbar c_\mu \frac{k_r}{2}b_a\tau^2\right)\mathfrak a^{-1}_{\hbar,r}\nonumber \\
&\qquad \times \left(-i\hbar\partial_\sigma+\hbar^{-1}\gamma_0-b_a\tau+\hbar c_\mu\frac{k_r}{2}b_a\tau^2\right)
\end{align}
where
\begin{equation}\label{eq:ah-r}
\mathfrak a_{\hbar,r}(\sigma,\tau)=1-\hbar c_\mu(\tau)\tau k_r(\sigma)\,.
\end{equation}
Since, $k_r$ satisfies \eqref{eq:hyp-gd},
 we have, for an arbitrarily fixed $n\in\mathbb N$ (with $\beta_a$,  $c_2(a)$  and $M_3(a)$ from \eqref{eq:beta},  \eqref{eq:c2} and  \eqref{eq:m3}),
\begin{equation}\label{eq:r-well}
\lambda_n(\mathcal N_{\hbar,r,\gamma_0})=\beta_a h + k_{max} M_3(a)h^\frac 32+(2n-1)\sqrt{\frac{{ k_2}M_3(a)c_2(a)}{2}}h^\frac 74+\mathcal O(h^{\frac{15}8})\,.
\end{equation}
We are now in a simply connected domain, so the operators $\mathcal N_{\hbar,r,\gamma_0}$ and $\mathcal N_{\hbar,r,0}$ are unitarily equivalent {(we can  gauge away the flux term $\mathcal N_{\hbar,r,\gamma_0}$)}.    Denote by $u_{\hbar,r}$ a normalized ground state of  $\mathcal N_{\hbar,r,0}$ (the operator without flux term),  a corresponding normalized ground state of  $\mathcal N_{\hbar,r,\gamma_0}$ is given by:
\begin{equation}\label{eq:gs-r}
\widecheck\phi_{\hbar, r}(\sigma,\tau) =
e^{-i\gamma_0 \sigma/\hbar^2}u_{\hbar,r}(\sigma,\tau).
\end{equation}

\subsection{Left well operator}\label{sec:LWop}

Using the symmetry operator
\[U f(\sigma,\tau):=\overline{f(-\sigma,\tau)}\,, \]
we can define the left well operator on  $L^2(\mathbb R^2;\mathfrak a_{\hbar,\ell}(\sigma,\tau)$ by :
\begin{equation}\label{eq:Nh-l}
\mathcal N_{\hbar, \ell, \gamma_0}=U^{-1}\mathcal N_{\hbar, r, \gamma_0}U\,,
\end{equation}
where
\[ \mathfrak a_{\hbar,\ell}(\sigma,\tau)=\mathfrak a_{\hbar,r}(-\sigma,\tau)\,.\]
The left and right operators have the same spectrum, and a normalized ground state of $\mathcal N_{\hbar, \ell, \gamma_0}$ is 
\begin{equation}\label{eq.phil0}
\widecheck\phi_{\hbar,\ell}:=U\widecheck\phi_{\hbar,r}=e^{-i\gamma_0 \sigma/\hbar^2}u_{\hbar,\ell}(\sigma,\tau)
\end{equation}
 where $u_{\hbar,\ell}=Uu_{\hbar,r}$.
 
\subsection{WKB expansions}

We focus on the right well operator and construct an approximate eigenvalue and an approximate ground state by WKB expansions, involving \emph{formal series} in  the sense of \cite[Notation~1.13]{BHR15}. The construction can be translated to the left operator by symmetry.  

Let us introduce the Agmon distance 
\begin{equation}\label{eq:Ag-r}
\Phi_r(\sigma)=\int_{[s_r,\sigma]}\sqrt{V_{a,r}(s)}ds
\end{equation} 
related to the  ``right well'' potential\footnote{Recall from \eqref{eq:m3} that $M_3(a)<0$, so $V_{a,r}\geq 0$.}
\begin{equation}\label{eq:pot-r}
V_{a,r}(\sigma)=\frac{2M_3(a)(k_r(s)-k_{\max})}{\mu''_a(\zeta_a)}
\end{equation} 
\begin{theorem}\label{thm:WKB}
There exist two sequences $ (b_j)_{j\geq 0}\subset \textsf{ Dom}(\mathcal N_{\hbar, r})$, $(\delta_j)_{j\geq0}\subset\mathbb R$, 
a  family of functions $(\Psi_{\hbar,r})_{\hbar\in(0,\hbar_0]}\subset L^2(\mathbb R^2)$ and a family of  real numbers $(\delta(\hbar))_{\hbar\in(0,\hbar_0]}$ such that
\begin{equation}\label{eq.psir}
e^{\Phi_r(\sigma)/\hbar^{\frac 12}} e^{-i\sigma\zeta_a/\hbar}\Psi_{\hbar,r}(\sigma,\tau)\underset{\hbar\to 0}{\sim}\hbar^{-\frac18} \sum_{j\geq 0} b_{j}(\sigma,\tau) \hbar^{\frac j2},
\end{equation}
\[ \delta(\hbar)\underset{\hbar\to 0}{\sim}\sum_{j\geq 0}\delta_{j}\hbar^{\frac j2}\,,\]
and
\begin{equation}\label{eq:Nh-WKB}
e^{\Phi_r(\sigma)/\hbar^{\frac 12}}\left(\mathcal{N}_{\hbar, r}-\delta(\hbar)\right) \Psi_{\hbar,r}=\mathcal{O}(\hbar^\infty)\,.
\end{equation}
Furthermore
\[\delta_{0}=\beta_a\,,\quad \delta_{1}=0,\quad \delta_2=M_3(a) k_{\max},\quad
\delta_3=\sqrt{\frac{{ k_2}M_3(a)c_2(a)}{2}}, \]
\begin{equation}\label{eq.an0}
b_{0}(\sigma,\tau)=f_{0}(\sigma)\phi_a(\tau),
\end{equation}
and $f_{0}$ solves the effective transport equation 
\begin{equation}\label{eq.effectiveT}
\frac{ \mu''_a(\zeta_a)}{2}(\Phi'_r\partial_\sigma+\partial_{\sigma}\Phi'_r)f_{0}+ iF(\sigma)f_{0}= \sqrt{\frac{{ k_2}M_3(a)c_2(a)}{2}}f_{0}\,,
\end{equation}
where $F$ is a smooth real-valued function, introduced in \eqref{eq:def-F},  such that $F(s_r)=0$.
\end{theorem}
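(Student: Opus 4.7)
The plan is to build the WKB expansion by conjugating $\mathcal{N}_{\hbar,r,0}$ (which is unitarily equivalent to $\mathcal{N}_{\hbar,r,\gamma_0}$, cf.~\eqref{eq:gs-r}) by the Agmon weight $e^{-\Phi_r/\hbar^{1/2}}$ and the phase $e^{i\sigma\zeta_a/\hbar}$, and then matching coefficients of $\hbar^{1/2}$. Setting
\begin{equation*}
\mathcal{L}_\hbar := e^{\Phi_r/\hbar^{1/2}} e^{-i\sigma\zeta_a/\hbar} \mathcal{N}_{\hbar,r,0} \, e^{-\Phi_r/\hbar^{1/2}} e^{i\sigma\zeta_a/\hbar},
\end{equation*}
the conjugation rule $-i\hbar\partial_\sigma \mapsto \zeta_a - i\hbar\partial_\sigma + i\hbar^{1/2}\Phi_r'$ together with the expansion $\mathfrak{a}_{\hbar,r}^{-1} = 1 + \hbar c_\mu \tau k_r + O(\hbar^2)$ yields a formal power series
\begin{equation*}
\mathcal{L}_\hbar \sim L_0 + \hbar^{1/2} L_1 + \hbar L_2 + \hbar^{3/2} L_3 + \cdots,
\end{equation*}
in which each $L_j$ is an at most second-order differential operator in $\tau$ with smooth $\sigma$-dependent coefficients, and $L_0$ is (unitarily equivalent to) the flat-edge operator $\mathfrak{h}_a[\zeta_a]$ at the critical Fourier variable, with ground state $\phi_a$ and eigenvalue $\beta_a$. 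Inserting the ansatz $v = \hbar^{-1/8}\sum_{j\geq 0} b_j\hbar^{j/2}$ and $\delta(\hbar) \sim \sum_{j\geq 0}\delta_j\hbar^{j/2}$, the equation $(\mathcal{L}_\hbar - \delta(\hbar))v = O(\hbar^\infty)$ becomes the cascade
\begin{equation*}
(L_0 - \beta_a)b_j \;=\; \sum_{k=1}^{j}(\delta_k - L_k)b_{j-k}, \qquad j\geq 0,
\end{equation*}
to be solved inductively in $j$.

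At order $j = 0$, the equation $(L_0 - \delta_0)b_0 = 0$ forces $\delta_0 = \beta_a$ and $b_0(\sigma,\tau) = f_0(\sigma)\phi_a(\tau)$ with $f_0$ a so-far undetermined scalar function. At order $j = 1$, the Fredholm solvability of $(L_0 - \beta_a)b_1 = (\delta_1 - L_1)b_0$ (its $\phi_a$-projection must vanish) immediately gives $\delta_1 = 0$ via the Feynman-Hellmann identity encoding that $\beta_a$ is the minimum of the transverse dispersion relation; the orthogonal component of $b_1$ is then uniquely determined by inverting $L_0 - \beta_a$ on $\{\phi_a\}^\perp$. At order $j = 2$, a slightly longer Feynman-Hellmann-type computation (identical to the one already carried out in \cite{AHK} to produce the $k_{\max}M_3(a)h^{3/2}$ term in Theorem~\ref{thm:AHK}) yields $\delta_2 = M_3(a) k_{\max}$; the constant $M_3(a)$ of \eqref{eq:m3} enters through a boundary-type integration by parts in $\tau$ picking up the jump of $\phi_a'$ at $\tau = 0$ caused by the discontinuity of the magnetic field.

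The crux of the argument, and the expected main obstacle, is the order $j = 3$ equation. Its Fredholm solvability condition produces an equation of the form~\eqref{eq.effectiveT} for $f_0$, with a smooth real function $F$ organising the contributions of the sub-sub-principal symbol of $\mathcal{L}_\hbar$: products of $\Phi_r'$ with the curvature $k_r$, second-order perturbations of the transverse model around $\xi = \zeta_a$, and the magnetic term $c_\mu k_r b_a\tau^2/2$. Evaluating \eqref{eq.effectiveT} at $\sigma = s_r$, where $\Phi_r'(s_r) = 0$, gives
\begin{equation*}
\delta_3 \;=\; \tfrac{1}{2}\mu_a''(\zeta_a)\Phi_r''(s_r) \;+\; i\,F(s_r)\,;
\end{equation*}
since $\delta_3$ must be real (being the leading correction to a real eigenvalue of a self-adjoint operator) and $F$ is real-valued, one extracts simultaneously $F(s_r) = 0$ and $\delta_3 = \tfrac{1}{2}\mu_a''(\zeta_a) g_a = \sqrt{k_2 M_3(a) c_2(a)/2}$, the final equality following from $g_a = \sqrt{V_{a,r}''(s_r)/2}$, $V_{a,r}''(s_r) = 2M_3(a)k_2/\mu_a''(\zeta_a)$, and $c_2(a) = \mu_a''(\zeta_a)/2$. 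The transport equation \eqref{eq.effectiveT} is then a first-order linear ODE with a regular singular point at $s_r$; the compatibility condition $\delta_3 = \tfrac{1}{2}\mu_a''(\zeta_a) g_a$ is precisely what is needed for the existence of a smooth, non-vanishing solution $f_0$ on a neighbourhood of $s_r$.

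Once $(b_0, b_1)$ and $(\delta_0,\ldots,\delta_3)$ are in hand, the remaining coefficients are obtained by iterating the same scheme: at every subsequent order one first inverts $L_0 - \beta_a$ on $\{\phi_a\}^\perp$ to compute the transverse part of $b_j$ from previously known data, and then uses the Fredholm solvability of the next cascade equation to simultaneously fix $\delta_{j+3}$ and yield an inhomogeneous version of \eqref{eq.effectiveT} determining the scalar profile of $b_j$ up to a multiple of $\phi_a$. The construction is purely algebraic, runs in parallel with the single-well WKB construction of \cite[Sec.~3]{BHR15}, and a standard Borel-summation argument converts the formal series $(b_j, \delta_j)$ into an actual family $(\Psi_{\hbar,r})_{\hbar\in(0,\hbar_0]}$ satisfying \eqref{eq.psir} and \eqref{eq:Nh-WKB}.
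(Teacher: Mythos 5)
Your proposal is correct and takes essentially the same route as the paper: conjugate $\mathcal N_{\hbar,r}$ by the Agmon weight and the phase $e^{i\sigma\zeta_a/\hbar}$, expand the conjugated operator in half-powers of $\hbar$, and solve the resulting cascade of equations by imposing Fredholm solvability (projection onto $\phi_a$) at each step, which successively forces $\delta_0=\beta_a$, $\delta_1=0$ (via the moment identity $M_1(a)=0$), $\delta_2=M_3(a)k_{\max}$ (via the eikonal equation for $\Phi_r$), and at order $3$ produces the transport equation \eqref{eq.effectiveT} and the value of $\delta_3$.

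The one place where your argument is slightly backwards is the derivation of $F(s_r)=0$. You deduce it from the a priori reality of $\delta_3$, but in the formal cascade the $\delta_j$ are not real by fiat — they are whatever the solvability conditions force them to be, and reality is something one must \emph{verify}, not invoke. The paper gets $F(s_r)=0$ directly from the explicit formula \eqref{eq:def-F}, which exhibits $F$ with an overall prefactor $|\Phi_r'(\sigma)|^2$; since $\Phi_r'(s_r)=0$ at the well, $F(s_r)=0$ follows at once, and only then does one conclude that the real choice $\delta_3=\tfrac12\mu_a''(\zeta_a)\Phi_r''(s_r)$ renders the transport equation solvable with $f_0(s_r)\neq 0$. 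This is a minor point and your overall structure, including the subsequent algebraic identification $\tfrac12\mu_a''(\zeta_a)\Phi_r''(s_r)=\sqrt{k_2M_3(a)c_2(a)/2}$ and the inductive continuation, matches the paper's proof.
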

\begin{remark}\label{rem:WKB-not}
Let us explain precisely how   the asymptotics in Theorem~\ref{thm:WKB} are interpreted. For  every  $N\geq 1$ we introduce  the function $\psi_{\hbar,r}^N(\sigma,\tau)$ and the real number $\delta^N(\hbar)$ as follows:
\[  
\Psi^N_{\hbar,r}(\sigma,\tau):= e^{-\Phi_r(\sigma)/\hbar^{\frac 12}} e^{i\sigma\zeta_a/\hbar}\, \hbar^{-\frac18} \sum_{j=0}^Nb_{j}(\sigma,\tau)\hbar^{\frac j2},\quad  \delta^N(\hbar)=\sum_{j=0}^N\delta_{j}\hbar^{\frac j2}\,.  \]
 Then \eqref{eq:Nh-WKB} means
 \[ e^{\Phi_r(\sigma)/\hbar^{\frac 12}}\left\|\left(\mathcal{N}_{\hbar, r}-\delta(\hbar)\right) \Psi_{\hbar,r}^N\right\|_{L^2(\mathbb R_\tau)}=\mathcal{O}(\hbar^N)\]
 locally uniformly with respect to $\sigma$.
\end{remark}
\begin{proof}[Proof of Theorem~\ref{thm:WKB}]
We work in an arbitrary bounded set of $\mathbb R^2$,
so, in the below computations,  we take $c_\mu=1$  in  \eqref{eq:Nh-r}  at  the cost of an error  $\mathcal O(h^\infty)$.   That is possible because our constructions will involve   functions  decaying  exponentially with respect  to the normal  variable $\tau$.
 
Let us introduce the operator 
\begin{equation*}
\widehat{\mathcal{N}}_{\hbar, r}:=e^{\Phi_r(\sigma)/\hbar^{\frac 12}} e^{i\sigma\zeta_a/\hbar} \mathcal{N}_{\hbar, r}e^{-i\sigma\zeta_a/\hbar}e^{-\Phi_r(\sigma)/\hbar^{\frac 12}} \,.
\end{equation*}
It admits the formal expansion
\begin{equation*}
\widehat{\mathcal{N}}_{\hbar, r}= \mathcal L_0+\hbar^{1/2}\mathcal  L_1+\hbar\mathcal L_2+\hbar^{3/2}\mathcal L_3+\hbar^2\mathcal L_4+\cdots
\end{equation*}
where
\begin{align*}
\mathcal L_0&=-\partial_\tau^2+(\zeta_a+b_a\tau)^2\\
\mathcal L_1&=-2(\zeta_a+b_a\tau)i\Phi'_r(\sigma)\\
\mathcal L_2&=k_r\partial_\tau-2(\zeta_a+b_a\tau)\Big( -i\partial_\sigma+\frac{k_r}{2}b_a\tau^2\Big)
-\Phi'_r(\sigma)^2+2k_r\tau(\zeta_a+b_a\tau)^2\\
\mathcal L_3&=\Big(-i\partial_\sigma+\frac{k_r}2b_a\tau^2\Big)i\Phi'(\sigma)+i\Phi'_r(\sigma)\Big(-i\partial_\sigma+\frac{k_r}2b_a\tau^2\Big)\\
&\qquad-4\Phi'_r(\sigma)\tau k_r(\zeta_a+b_a\tau)\\
\mathcal L_4&=-\partial_\sigma^2+2k_r^2\tau^2(\zeta_a+b_a\tau)^2 \\
&\qquad-(\zeta_a+b_a\tau)\Big[ \Big(-i\partial_\sigma+\frac{k_r}2b_a\tau^2\Big)k_r+k_r\Big(-i\partial_\sigma+\frac{k_r}2b_a\tau^2\Big) \Big]\\
&\,\,\vdots
\end{align*}
Let 
$b(\sigma,\tau;\hbar):=\sum_{j\geq 0}b_{j}(\sigma,\tau) \hbar^{\frac j2}$ and let us formally solve the equation
\[ \big(\widehat{\mathcal{N}}_{\hbar, r}-\delta(\hbar)\big)b(\sigma,\tau;\hbar)=\mathcal O(\hbar^\infty).\]
Expanding  the foregoing equation in powers of  $\hbar^{1/2}$, the vanishing of the coefficient of each $\hbar^{j/2}$, $j\geq 0$, yields the following equations

\begin{align*}
(\mathcal L_0-\delta_0)b_0&=0\\
(\mathcal L_0-\delta_0)b_1&=(\delta_1-\mathcal L_1)b_0\\
(\mathcal L_0-\delta_0)b_2&=(\delta_2-\mathcal L_2)b_0+(\delta_1-\mathcal L_1)b_1\\
(\mathcal L_0-\delta_0)b_3&=(\delta_3-\mathcal L_3)b_0+(\delta_2-\mathcal L_2)b_1+(\delta_1-\mathcal L_1)b_2\\
&\,\,\vdots
\end{align*}
We will find solutions to these equations one by one.
The first equation leads us  to choose $\delta_0=\zeta_a$ and $b_0(\sigma,\tau)=f_0(\sigma)\phi_a(\tau)$, where $f_0(\sigma)$ is to be determined at a later stage. The function $f_0$ will actually be free untill the first equation involving $\mathcal L_3$.

For the equation for $b_1$, we determine $\delta_1$ by assuming that $(\delta_1-\mathcal L_1)b_0$ is orthogonal  to $\phi_a$ in $L^2(\mathbb R)$. Then, we take the inner product with $\phi_a$ in  $L^2(\mathbb R)$, use \eqref{eq:m0} and  get
\[ \delta_1=0,\quad b_1(\sigma,\tau)=2i\Phi'_r(\sigma)f_0(\sigma) {\mathcal R_a}\big((\zeta_a+b_a\tau)\phi_a\big),\]
where $\mathcal R_a$ the regularized resolvent introduced in \eqref{eq:R}.   Since we are applying $\mathcal R_a$ on functions orthogonal to $\phi_a$,  we can slightly abuse notation and say that it is equal to  $(\mathcal L_0-\delta_0)^{-1}$.

The equation for $b_2$ will  determine $\delta_2$. This equation can be solved if $(\delta_2-\mathcal L_2)b_0-\mathcal L_1b_1$ is orthogonal to $\phi_a$ in $L^2(\mathbb R)$, which we assume henceforth.  Taking the inner product with $\phi_a$ in  $L^2(\mathbb R)$,  using \eqref{eq:I2} and Remark~\ref{rem:AHK2.3}, we get
\[
\delta_2f_0(\sigma)+\frac{\mu_a''(\zeta_a)}{2}\Phi'_r(\sigma)^2f_0(\sigma)-M_3(a)k_r(\sigma)f_0(\sigma)=0,
\]
since
\[ 2\int_{\mathbb R}\tau(\zeta_a+b_a(\tau) \tau)^2|\phi_a(\tau)|^2\,d\tau-\int_{\mathbb R}b_a(\tau) \tau^2(\zeta_a+b_a(\tau) \tau)|\phi_a(\tau)|^2\,d\tau=M_3(a)\,.\]
So, we choose $\delta_2=k_r(0)=k_{\max}$, and the foregoing  equation  involving  $f_0$  is valid  everywhere in light of \eqref{eq:Ag-r}, independently of the choice of $f_0$. At the same time, we choose $b_2$ as follows:
\[b_2(\sigma,\tau)=(\mathcal L_0-\delta_0)^{-1}\Big((\delta_2-\mathcal L_2)b_0-\mathcal L_1b_1 \Big). \]
From the equation of $b_3$, we will determine $\delta_3$ and $f_0(\sigma)$. Taking the inner product with $\phi_a$ in $L^2(\mathbb  R)$  and using \eqref{eq:I2}, we  get
\begin{multline*}
\Big\langle  (\delta_3-\mathcal L_3)b_0+(\delta_2-\mathcal L_2)b_1+(\delta_1-\mathcal L_1)b_2, \phi_a\Big\rangle_{L^2(\mathbb R)}=\\
\Big(\delta_3-\frac{\mu''(\zeta_a)}2\big(\Phi'_r\partial_\sigma+\partial_\sigma\Phi'_r \big)\Big)f_0(\sigma)+iF(\sigma)f_0(\sigma),
\end{multline*}
where  $F(\sigma)$ is the real-valued function
\begin{equation}\label{eq:def-F}
F(\sigma)=|\Phi'_r(\sigma)|^2\int_{\mathbb R}  g(\sigma,\tau) \phi_a(\tau)d\tau,
\end{equation}
and
\begin{align*}
 g(\sigma,\tau)&=(\mathcal L_0-\delta_0)^{-1}\big(g_1(\sigma,\tau)+g_2(\sigma,\tau)\big)\\
g_1(\sigma,\tau)&=
-\Big(k_{\max}+k_r(\sigma)\big(\zeta_a+b_a\tau)^2-b_a(\zeta_a+b_a\tau)\tau^2\big)-|\Phi(\sigma)|^2\Big)\phi_a(\tau)\\
&\qquad+k_r(\sigma)\phi_a'(\tau)\\
g_2(\sigma,\tau)&=-4|\Phi_r(\sigma)|^2(\mathcal L_0-\delta_0)^{-1}\big((\zeta_a+b_a\tau)\phi_a(\tau)\big).
\end{align*}
Since $\Phi_r'(s_r)=0$, we observe that $F(s_r)=0$. We can solve the equation of $b_3$ if  $(\delta_3-\mathcal L_3)b_0+(\delta_2-\mathcal L_2)b_1+(\delta_1-\mathcal L_1)b_2$ is orthogonal to $\phi_a$ in $L^2(\mathbb R)$,  which yields the following equation for $f_0$:
\[\Big(\delta_3-\frac{ \mu_a''(\zeta_a)}2\big(\Phi'_r\partial_\sigma+\partial_\sigma\Phi'_r \big)\Big)f_0(\sigma)+F(\sigma)f_0(\sigma)=0.\]
Since $F(s_r)=\Phi'_r(s_r)=0$, the foregoing  equation has a solution satisfying $f_0(s_r)\not=0$ if $\delta_3= \frac{\mu_a''(\zeta_a)}{2}\Phi''_r(s_r)$, thereby determining $\delta_3$ (from \eqref{eq:pot-r}) and $f_0$.
The procedure can be continued to any preassigned order.
\end{proof}

\begin{remark}[Solving \eqref{eq.effectiveT} \& normalization of $\Psi_{\hbar,r}$]\label{rem:phase}~

In \eqref{eq.effectiveT}, we make the ansatz $f_0(\sigma)=e^{i\alpha_0(\sigma)}\tilde f_0(\sigma)$, with $\tilde f_0$ and $\alpha_0$ are real-valued functions such that $\tilde f_0(0)>0$.  Then we get from \eqref{eq.effectiveT}:
\begin{equation*}
\frac{\mu''_a(\zeta_a)}2\big( \Phi'_r\partial_\sigma+\partial_{\sigma}\Phi'_r\big)\tilde f_0(\sigma)= \sqrt{\frac{{ k_2}M_3(a)c_2(a)}{2}}f_{0}(\sigma)
\end{equation*}
and 
\begin{equation*}
\mu''_a(\zeta_a)\alpha_0'(\sigma)\Phi_r'(\sigma)+F(\sigma)=0\,.
\end{equation*}

This will determine $\tilde f_0(\sigma)$ uniquely up to  the choice of  $\tilde f_0(0)$, and also $\alpha_0(\sigma)$ uniquely up to an additive constant (see  \cite[Eq.~(2.14)~\&~Rem.~2.9]{BHR21}).
We choose $\tilde f(0)=\left(\frac{g_a}{\pi}\right)^{1/4}\left(\mathsf A_\textsf{ u}^a\right)^{1/2}$ which yields that the WKB solution  $\Psi_{\hbar,r}$ in Theorem~\ref{thm:WKB} is almost normalized, $\|\Psi_{\hbar,r}\|\sim 1$. The constant $\alpha_a$ appearing in Theorem~\ref{thm:FHK} is
\begin{equation}\label{eq:alpha}
\alpha_a=\frac{\alpha_{0}(0)-\alpha_0(-L)}{L}.
\end{equation}
\end{remark}

\section{Optimal tangential Agmon estimates}\label{sec:dec}

The   challenge   of obtaining optimal decay estimates of bound states of the Neumann magnetic Laplacian matching with the WKB solutions was recently  taken up in  \cite{BHR21}  by  introducing pseudo-differential calculus  with operator-valued symbols. Fortunately, the method is quite general and can handle our situation  of magnetic steps.

\subsection{A tangential elliptic estimate}

We work with the single `right well' flux free operator, $\mathcal N_{\hbar,r}:=\mathcal  N_{\hbar,r,0}$, introduced in \eqref{eq:Nh-r}. For the sake of simplicity, we will omit the reference to `right well' in the notation and write $\mathcal N_{\hbar}$ and  $k$ instead of $\mathcal N_{\hbar,r}$ and $k_r$.

 The optimal estimates, on the bound states  of  $\mathcal N_{\hbar}$, will hold in spaces with an exponential weight, defined via  a sub-solution of an effective eikonal  equation. More precisely, we  consider a family of Lipschitz functions $(\varphi_\hbar)_{h\in(0,1]}\subset C(\mathbb R;\overline{\mathbb R_+})$ satisfying the following hypothesis:
\begin{assumption}\label{ass:phi}  For all $M>0$ there exist $\hbar_0,C,R>0$ such that, for all $\hbar\in(0,\hbar_0)$, the  function $\varphi:=\varphi_\hbar$ satisfies
	\begin{enumerate}[\rmfamily (i)]
		\item for all $\sigma\in\mathbb R$, $\mathfrak{v}(\sigma)-\frac{\mu_a''(\zeta_a)}{2}\varphi'(\sigma)^2\geq 0$, 
		\item for all $\sigma$ such that $|\sigma-s_r|\geq R\hbar^{\frac 12}$, $\mathfrak{v}(\sigma)-\frac{\mu_1''(\xi_0)}{2}\varphi'(\sigma)^2\geq M \hbar$,
		\end{enumerate}	
where $\mathfrak{v}(\sigma)=M_3(a)(k(\sigma)-k_{\max})$.
\end{assumption}
In the sequel, to  lighten the notation, we write   $\varphi$ instead of $\varphi_\hbar$. We consider the  conjugate operator, with the same domain as  $\mathcal  N_{\hbar}$, and defined by:
\begin{equation}\label{eq:conj-op}
\mathcal N_{\hbar}^\varphi=e^{\varphi/\hbar^{\frac12}}\mathcal N_{\hbar}e^{-\varphi/\hbar^{\frac12}}
=-a_\hbar^{-1}\partial_\tau a_\hbar\partial_\tau+ a_\hbar^{-1}\mathcal T_{\hbar}^\varphi a_\hbar^{-1}\mathcal T_{\hbar}^\varphi
\end{equation}
where
\[ \mathcal T_{\hbar}^\varphi:=
\left(-i\hbar\partial_\sigma-b_a\tau+i\hbar^{\frac 12}\varphi'+\hbar c_\mu\frac{\kappa_r}{2}b_a\tau^2\right)
\]
\begin{theorem}[Bonnaillie-No\"{e}l--H\'erau--Raymond]\label{thm:dec}
Let $c_0>0$ and $\chi_0\in C_c^\infty(\mathbb  R)$ be $1$ in a neighborhood of $0$. Under Assumption \ref{ass:phi}, there exist $c, \hbar_0>0$ such that for all $\hbar\in(0,\hbar_0)$, $z\in[ \beta_a+M_3(a)k_{\max}\hbar-c_0\hbar^2,\beta_a+M_3(a)k_{\max}\hbar+c_0\hbar^2]$ and all $\psi\in\textsf{ Dom}(\mathcal{N}_\hbar^{\varphi})$,
\[
c\hbar^2\|\psi\|\leq \|\langle\tau\rangle^6(\mathcal{N}^{\varphi}_\hbar-z)\psi\|
+\hbar^2\|\chi_0(\hbar^{-\frac 12}R^{-1}(\sigma-s_r)) \psi\|\,,
	\]
	and
	\[
	c\hbar^{2}\|\hbar^2 \partial_\sigma^2\psi\|\leq \|\langle\tau\rangle^6(\mathcal{N}^{\varphi}_\hbar-z)\psi\|
	+\hbar^2\|\chi_0(\hbar^{-\frac 12}R^{-1}(\sigma-s_r)) \psi\|\,,
		\]
where $\langle \tau\rangle=(1+\tau^2)^{1/2}$.
\end{theorem}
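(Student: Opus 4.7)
My plan is to adapt the operator-valued semiclassical pseudodifferential machinery of \cite[Sec.~3]{BHR21} to our setting, replacing the de\,Gennes fibre operator $\mathfrak h^N[\xi]$ by the magnetic-step fibre operator $\mathfrak h_a[\xi]$ from \eqref{eq:ha}. The effective semiclassical parameter is $\hbar=h^{1/2}$, and I view $\mathcal N_\hbar^\varphi - z$ in \eqref{eq:conj-op} as an $\hbar$-pseudodifferential operator in the tangential variable $\sigma$ with operator-valued Weyl symbol acting on $L^2(\mathbb R_\tau)$.

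\textbf{Symbol computation and Feshbach--Grushin reduction.} Expanding $\mathcal N_\hbar^\varphi$ in half-powers of $\hbar$, the principal part of the symbol is the fibre operator
\[
p_0(\sigma,\xi) = -\partial_\tau^2 + (\xi - b_a(\tau)\tau)^2,
\]
i.e. $\mathfrak h_a[-\xi]$ up to sign, with simple lowest eigenvalue $\mu_a(-\xi)$ and spectral gap down to $\phi_a$. The order $\hbar^{1/2}$ contribution comes from the weight $e^{\varphi/\hbar^{1/2}}$ and effectively shifts $\xi \mapsto \xi + i\hbar^{1/2}\varphi'(\sigma)$ inside the squared momentum, while the order $\hbar$ contribution collects the curvature terms and, using the moment identity recalled around \eqref{eq:m3}, produces after projection onto $\phi_a$ the potential $M_3(a)k(\sigma)$. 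A Feshbach--Grushin reduction on $\operatorname{span}(\phi_a)\subset L^2(\mathbb R_\tau)$ then yields the scalar effective symbol
\[
p^{\mathrm{eff}}(\sigma,\xi;\hbar) = \beta_a + \tfrac{1}{2}\mu_a''(\zeta_a)\bigl(\xi - \zeta_a + i\hbar^{1/2}\varphi'(\sigma)\bigr)^2 + \hbar\, M_3(a)k(\sigma) + O(\hbar^{3/2}),
\]
whose real part, after subtracting $\beta_a + M_3(a)k_{\max}\hbar$ (which absorbs $z$ up to the allowed window of size $c_0 \hbar^2$), equals $\tfrac{1}{2}\mu_a''(\zeta_a)(\xi - \zeta_a)^2 + \hbar\bigl(\mathfrak v(\sigma) - \tfrac{1}{2}\mu_a''(\zeta_a)\varphi'(\sigma)^2\bigr) + O(\hbar^2)$.

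\textbf{Ellipticity and parametrix.} Assumption~\ref{ass:phi}(i) makes this real part non-negative on phase space, and (ii) makes it bounded below by $M\hbar^2$ on the exterior of an $\hbar^{1/2}$-neighbourhood of $(s_r,\zeta_a)$, provided $M$ is chosen larger than $c_0$ plus the $O(1)$ constants of the remainder. Standard operator-valued $\hbar$-pseudodifferential calculus then produces a parametrix for $\mathcal N_\hbar^\varphi - z$ on this elliptic region, of operator norm $O(\hbar^{-2})$. Introducing the cutoff $\chi_0(\hbar^{-1/2}R^{-1}(\sigma - s_r))$ to localise the degenerate region in $\sigma$, and using also the elliptic control of the $\xi$-direction (which confines $\xi$ to an $\hbar^{1/2}$-neighbourhood of $\zeta_a$ on the complement), one gets the first claimed estimate
\[
c\hbar^2\|\psi\| \leq \|\langle\tau\rangle^6(\mathcal N_\hbar^\varphi - z)\psi\| + \hbar^2\|\chi_0(\hbar^{-1/2}R^{-1}(\sigma - s_r))\psi\|.
\]
The weight $\langle\tau\rangle^6$ is needed to absorb the polynomial $\tau$-growth of the sub-principal perturbations $\tau^2 b_a$, $\tau(\zeta_a + b_a\tau)^2$, and of their derivatives appearing in the Feshbach remainder. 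The second estimate on $\hbar^2\partial_\sigma^2\psi$ follows from the same parametrix since it also controls the Weyl quantisation of $(\xi-\zeta_a)^2$ on the elliptic region, and hence $\hbar^2\partial_\sigma^2\psi$ up to terms of order $\hbar\partial_\sigma\psi$ and $\psi$ which are subsumed in the first estimate and its interpolated consequence.

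\textbf{Main obstacle.} The delicate point will be the bookkeeping of the two half-powers of $\hbar$ (arising from the weight $\varphi/\hbar^{1/2}$ and from the normal rescaling $t = \hbar\tau$) together with the non-self-adjointness of $\mathcal N_\hbar^\varphi$: this forces a careful choice of operator-valued symbol class, and a precise quantification of the resolvent of $p_0 - \beta_a$ on the orthogonal complement of $\phi_a$ against the weight $\langle\tau\rangle^6$. Once these estimates on $\phi_a$ and its regularised resolvent $\mathcal R_a$ are in place, the remainder of the proof -- pseudo-local parametrix, Gårding-type inequality, absorption of the degenerate region via $\chi_0$ -- transfers without structural change from \cite{BHR21}, because the analytic properties of $\mu_a$, $\phi_a$, $c_2(a)$ and $M_3(a)$ recorded in \eqref{eq:beta}, \eqref{eq:c2}, \eqref{eq:beta*} and \eqref{eq:m3} are exact analogues of their de\,Gennes counterparts.
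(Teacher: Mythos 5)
Your proposal follows essentially the same route as the paper: view $\mathcal N_\hbar^\varphi$ as an $\hbar$-pseudodifferential operator with operator-valued Weyl symbol whose principal part is the fibre operator $p_0(\sigma,\xi)=-\partial_\tau^2+(\xi-b_a\tau)^2$, perform a Grushin/Feshbach reduction onto $\operatorname{span}(\phi_a)$ to obtain a scalar effective symbol whose real part is controlled via Assumption~\ref{ass:phi}, and then recover the resolvent estimate by a parametrix on the elliptic region together with a positivity inequality that absorbs the degenerate $\hbar^{1/2}$-neighbourhood of $(s_r,\zeta_a)$. The only imprecision is describing the last step as a ``Gårding-type inequality'': the paper invokes the Fefferman--Phong inequality \cite{B} on the rescaled non-negative symbol $\mathfrak a(\hat\sigma,\hat\xi;\hbar)=\mathscr A(\hbar^{1/2}\hat\sigma,\hbar^{1/2}\hat\xi;\hbar)$, and this sharper gain (rather than ordinary sharp Gårding) is what yields the required $O(\hbar^2)$ precision, while the explicit frequency cut-off $\chi_1$ you allude to is what keeps the operator-valued symbol in a manageable class away from $\xi=\hat\zeta_a$.
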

Modulo the decomposition of  the symbol of the operator $\mathcal  N_{\hbar}^\varphi$ and its parametrix, the proof of Theorem~\ref{thm:dec} is the same  as that of \cite[Thm.~5.1]{BHR21}. In the sequel, we give only the new ingredients.

Let us write 
\begin{equation} 
\mathcal N_{\hbar}^\varphi u =\textrm{ Op}^\textrm{ W}_{\hbar}(n_{\hbar}) \,u =\frac{1}{(2\pi\hbar)} \iint_{\mathbb  R^2} e^{i(\sigma-s)\cdot \xi/\hbar} n_\hbar \left(\frac{\sigma+s}{2}, \xi\right)u(s)   ds d\xi
\end{equation}
where the foregoing quantization formula is formal, unless we consider it on, say,  for $u$ in the space
$
\mathcal  S\big(\mathbb R;\widehat{\mathcal S}(\mathbb R)\big)$ where
\begin{equation}\label{eq:hat-S}
\widehat{\mathcal S}(\mathbb R)=\{v\in H^2(\mathbb R))\,|~v|_{\overline{\mathbb R_\pm}}\in\mathcal S(\overline{\mathbb R_\pm})\}.
\end{equation}
The operator-valued symbol  $n_{\hbar}$   can be decomposed as follows
\[n_\hbar=n_0+\hbar^{\frac 12}n_1+\hbar n_2+  \hbar^{\frac 32}n_3 + \hbar^2 \tilde{r}_\hbar  ,\]
 where 
\begin{equation}\label{eq.nj}
\begin{aligned}
n_0(\sigma,\xi)&=-\partial_\tau^2+(\xi-b_a(\tau)\tau)^2\,,\\
n_1(\sigma,\xi)&=2i(\xi-b_a(\tau)\tau)\varphi'(\sigma)\,,\\
n_2(\sigma,\xi)&=-\varphi'(\sigma)^2+\kappa c_\mu(\tau) \partial_\tau+c_\mu \kappa(\sigma)(\xi-b_a(\tau)\tau)b_a(\tau)\tau^2\\
&\quad+2\kappa(\sigma)\tau c_\mu (\tau)(\xi-b_a\tau)^2  +\kappa(\sigma) \tau c_\mu'(\tau )\,, \\
\Re\, n_3(\sigma,\xi)& = 0\,, \\
 \tilde{r}_{\hbar}(\sigma,\xi) &= {\mathcal O}(\tau^4, (\xi-b_a(\tau)\tau)^2 \tau^2, (\xi-b_a\tau)\tau, \tau^2\partial_\tau).
\end{aligned}
\end{equation}
The notation  ${\mathcal O}$ is defined in \cite[Notation~3.1]{BHR21}:\\
{\itshape For differential operators $A,B,C,\cdots$  on $\mathbb  R_\tau$,  writing $A=\mathcal  O(B,C,\cdots)$ means the following:
\begin{equation}\label{eq:not-O-op}
\exists\,c> 0, ~\forall\,u\in \widehat{\mathcal S}(\mathbb  R),\quad\|Au\|_{L^2(\mathbb R)}\leq c\big(\|Bu\|_{L^2(\mathbb R)}+\|Cu\|_{L^2(\mathbb R)}+\cdots \big),\end{equation}
where $\widehat{\mathcal S}(\mathbb  R)$ is the space introduced in \eqref{eq:hat-S} and  the constant $c$ is independent of $A,B,C,\cdots$ (in particular, in \eqref{eq.nj},  the estimate is uniform with respect to $(\sigma,\xi)$).}

Let  us introduce  a modified symbol by  truncating the frequency variable. Recall that $\zeta_a<0$ is the unique minimum of the model operator with a flat edge (see \eqref{eq:beta}). It will be  convenient to introduce
\begin{equation}\label{eq:hat-zeta}
\hat\zeta_a:=-\zeta_a>0\,.
\end{equation}
 Pick a smooth bounded and increasing function $\chi\in  C^\infty(\mathbb R)$ such that $\chi(\xi)=\xi$ for $\xi\in (-\hat\zeta_a/2,\hat\zeta_a/2)$, and  $\eta_+:=\lim_{\xi\to+\infty}\chi(\xi)\in(0,\hat\zeta_a)$. We introduce the function 
\begin{equation}\label{eq:chi1}
\chi_1(\xi)=\hat\zeta_a+\chi(\xi-\hat\zeta_a),
\end{equation}
and the operator
\begin{equation}\label{eq:new-symbol}
\textrm{ Op}^\textrm{ W}_{\hbar}(p_{\hbar})\quad\textrm{  where~}p_{\hbar}(\sigma,\xi):=n_{\hbar}(\sigma,\chi_1(\xi)).
\end{equation}
Now consider the Grushin problem defined by the matrix operator
\begin{equation}\label{eq:full-symbol}
\mathcal P_z(\sigma,\xi)=\left(\begin{array}{ll} p_\hbar-z&\cdot v_\xi\\
\langle\cdot,v_\xi\rangle&0 \end{array}\right)\in\mathcal S\big(\mathbb R_{\sigma,\xi}^2,\mathcal L(\textsf{ Dom}(p_0)\times \mathbb C,L^2(\mathbb R)\times\mathbb C) \big)
\end{equation}
where $\mathcal S\big(\mathbb R_{\sigma,\xi}^2,\mathcal L(\textsf{ Dom}p_0\times \mathbb C,L^2(\mathbb R)\times\mathbb C) \big)$ is defined in \cite[Notation~3.1]{BHR21},
\begin{equation}\label{eq:p0}
 p_0(\sigma,\xi) =-\partial_\tau^2+(\chi_1(\xi)-b_a\tau)^2
\end{equation}
is the principal symbol of $p_\hbar$ and $v_\xi$ is the positive normalized ground state of $p_0$, with corresponding eigenvalue $\mu_1(\chi_1(\xi))=\mu_a(-\chi_1(\xi))$ (see \eqref{mu_a_1}).

From the decomposition of $n_\hbar$, we can decompose $\mathcal P_z$ as follows:
\[ \mathcal P_z=\mathcal P_z^{[3]}+\hbar^2\mathcal R_\hbar,\quad 
 \mathcal P_z^{[3]}=\mathcal P_{0,z}+\hbar^{1/2} \mathcal P_1+\hbar\mathcal P_2
 +\hbar^{3/2} \mathcal P_3,\]
 where
 \[\mathcal  P_{0,z}=\left(\begin{array}{ll} p_0-z&\cdot v_\xi\\
\langle\cdot,v_\xi\rangle&0 \end{array}\right) ,\quad \forall j\geq 1,~\mathcal P_j=\left(\begin{array}{ll} p_j&0\\
0&0 \end{array}\right),\quad \mathcal R_\hbar= \left(\begin{array}{ll} r_\hbar&0\\
0&0 \end{array}\right),\]
and
\begin{equation}\label{eq.pj}
\begin{aligned}
p_1&=2i(\chi_1(\xi)-b_a\tau)\varphi',\\
p_2&=-\varphi'^2+\kappa c_\mu \partial_\tau+c_\mu \kappa(\chi_1(\xi)-b_a\tau)b_a\tau^2+2\kappa\tau c_\mu (\chi_1(\xi)-b_a\tau)^2  \nonumber \\
&\quad +\kappa \tau c_\mu'\left(\tau \right), \\
 \textrm{ Re}\, p_3 & = 0, \\
 \tilde{r}_{\hbar} &= {\mathcal O}(\tau^4,  \tau^2\partial_\tau),
\end{aligned}
\end{equation}
where $\mathcal O (\tau^4,  \tau^2\partial_\tau)$ is understood in the sense of \eqref{eq:not-O-op}.

Then  one can construct a parametrix of $\textrm{ Op}^\textrm{ W}_{\hbar}(\mathcal P_z)$ (see \cite[Thm.~3.5]{BHR21} for details)
\[ \mathcal L_z^{[3]}=\left(\begin{array}{ll} q_z&q_z^+\\
q_z^-&q_z^\pm \end{array}\right),\quad  \textrm{ Op}^\textrm{ W}_{\hbar}(\mathcal L_z^{[3]}) \, \textrm{ Op}^\textrm{ W}_{\hbar}(\mathcal P_z)=\textrm{ Id}+\hbar^2\mathcal O(\langle\tau\rangle^6),\]
where
\[q_z^\pm=q_{0,z}^\pm+\hbar^{1/2}q_{1,z}^\pm+\hbar q_{2,z}^\pm +\hbar^{3/2}q_{3,z}^\pm\,,\]
with
\begin{equation}\label{eq.qj}
\begin{aligned}
q_{0,z}^\pm&=z-\mu_1(\chi_1(\xi)),\\
q_{1,z}^\pm&=-i\varphi'(\sigma)\partial_\xi\mu_1(\chi_1(\xi)),\\
q_{2,z}^\pm&=k(\sigma)C_1(\xi,\mu)+C_2(\xi,z)\varphi'(\sigma)^2,\\
\textrm{ Re} \,q_{3,z}^\pm&=0,
\end{aligned}
\end{equation}
and
\begin{equation}\label{eq.Cj}
\begin{aligned}
C_1(\xi,\mu)=&-\langle \big(c_\mu \partial_\tau+c_\mu (\chi_1(\xi)-b_a\tau)b_a\tau^2+2\tau c_\mu (\chi_1(\xi)-b_a\tau)^2\big)v_\xi,v_\xi\rangle\\
&\qquad-\langle\tau c_\mu'\partial_\tau\big)v_\xi,v_\xi\rangle, \\
C_2(\xi,\mu)=&1-\langle(p_0-z)^{-1}\Pi^\bot(\chi_1(\xi)-b_a\tau)v_\xi,(\chi_1(\xi)-b_a\tau)v_\xi\rangle.
\end{aligned}
\end{equation}
Here $\Pi=\Pi_\xi$  is the orthogonal  projection on  $v_\xi$  and $\Pi^\bot=\textrm{ Id}-\Pi$.  Note that, by  \eqref{eq:hat-zeta}, Remark~\ref{rem:AHK2.3} and \eqref{eq:I2},
\[C_1(\hat\zeta_a,0)=-M_3(a),\quad  C_2(\hat\zeta_a,\beta_a)=\frac{\mu_a''(\zeta_a)}{2}.\] 
Now we argue like \cite[Prop~4.4]{BHR21}. Recall that $|z-\beta_a-M_3(a)k_{\max}\hbar|\leq  c_0\hbar^2$ and that $\mu\to0$ as $h\to0$. Expanding $C_1(\xi,\mu)$ and $C_2(\xi,z)$ near $\xi=\hat\zeta_a$, we  get
\begin{align*}
C_1(\xi,\mu)&=-M_3(a)k_{\max}\hbar+\mathcal O\big(\hbar\min(1,|\xi-\hat\zeta_a|)\big),\\
C_2(\xi,z)&=\frac{\mu_a''(\zeta_a)}{2}+\mathcal O\big(\hbar\min(1,|\xi-\hat\zeta_a|)\big).
\end{align*}
Furthermore, since $\mu'_a(\zeta_a)=0$ and $\mu''_a(\zeta_a)>0$, there exists a constant $c_1>0$ such that
\[\mu(\chi_1(\xi))-z \geq c_1\min(1,|\xi-\hat\zeta_a|^2).  \]
Now we have the following lower bound
\[ -\textrm{ Re}\,q_z^\pm\geq  \hbar(\mathfrak  v(\sigma)-C_2(\hat\zeta_a,\beta_a)\varphi'(\sigma)^2\big )-C\hbar^2,\]
where $\mathfrak  v(\sigma)$  is introduced in Assumption~\ref{ass:phi}. 
We apply the Fefferman-Phong inequality  \cite[Thm.~3.2]{B} (see also \cite[Thm.~4.3.2]{Z}) on the symbol 
\[\mathfrak a(\hat\sigma,\hat\xi;\hbar):=\mathscr{A}(\hbar^{1/2}\hat\sigma,\hbar^{1/2}\hat\xi;\hbar), \]
where
\[\mathscr{A}(\sigma,\xi;\hbar):=-\textrm{ Re}\, q_z^\pm(\sigma,\xi) -\hbar(\mathfrak  v(\sigma)-C_2(\hat\zeta_a,\beta_a)\varphi'(\sigma)^2\big )-C\hbar^2  .\]
In  that way, we have 
\[-\textrm{ Re}\langle  \textrm{ Op}_{\hbar}^\textrm{  W}(q_z^\pm)\psi,\psi\rangle \geq \hbar\int_{\mathbb R}\left(\mathfrak v(\sigma)-\frac{\mu''_a(\zeta_a)}{2}|\varphi'(\sigma)|^2{-C\hbar}\right)|\psi|^2d\sigma, \]
from which  we get the following estimate (see  \cite[Thm.~4.2]{BHR21})
\[cR^2\hbar^2\|\psi\|\leq \|(\textrm{ Op}_{\hbar}^\textrm{  W} (p_\hbar)-z)\psi \|+C_R\hbar^2\|\chi_0
(\hbar^{-1/2}R^{-1}(\sigma-s_r))\psi\|+\hbar^2\|\tau^6\psi\|\,, \]
which is almost the inequality in  Theorem~\ref{thm:dec}, but with the  operator $\textrm{ Op}_{\hbar}^\textrm{  W}(p_\hbar)$ instead  of the operator  $\mathcal N_{\hbar}^\varphi=\textrm{ Op}_{\hbar}^\textrm{  W}(n_\hbar)$. The only difference between the two operators is the frequency cut-off in the symbol, which can be removed following the same argument    in \cite[Thm.~5.1]{BHR21}.

\subsection{Applications}

By appropriate choices of the function $\varphi$ in Theorem~\ref{thm:dec}, we get optimal tangential estimates  for the bound states of  the `single' and `double' well operators. For details, see \cite[Corol.~5.7, Corol.~6.1\,\&\,Prop.~6.2]{BHR21}.

\begin{proposition}[Decay of bound states]\label{prop:dec-bs}
Let $\theta,\varepsilon\in(0,1)$ and $K>0$. There exist $C,\hbar_0>0$ such that for all $\hbar\in(0,\hbar_0)$, the  following is true. 

If $\lambda$ eigenvalue of the operator $\mathscr{N}_
{\hbar}$ in \eqref{eq:Nh}, $\left|\lambda-\left(\beta_a+M_3(a)k_{\max}\hbar\right)\right|\leq K\hbar^2$ and  $u\in\textsf{ Dom}(\mathscr{N}_{\hbar})$ is an eigenfunction associated to  $\lambda$, then
\[\int_{[-L,L)\times\mathbb R}e^{2\varphi/\hbar^{\frac 12}}|u|^2dsd\tau\leq Ce^{\varepsilon/\hbar^{\frac 12}}\|u\|^2_{L^2([-L,L)\times\mathbb R)}\,.\]
where
\[\varphi=(1-\theta)^{1/2}\min(\tilde\Phi_r,\tilde\Phi_\ell)\,,\]
and $\tilde\Phi_r,\tilde\Phi_\ell$  are $2L$-periodic functions satisfying, for $\eta$  sufficiently small,
\[
\begin{aligned}
\tilde\Phi_r(\sigma)\,|_{-L\leq \sigma\leq s_\ell-\eta}&=\Phi_r(\sigma):=\sqrt{\frac{-2M_3(a)}{\mu_a''(\zeta_a)}}\int_{[s_r,\sigma]}\sqrt{k_{\max}-k(s)}\,ds,\\  
\tilde\Phi_\ell(\sigma)\,|_{-L\leq \sigma\leq s_r-\eta}&=\Phi_\ell(\sigma):=\sqrt{\frac{-2M_3(a)}{\mu_a''(\zeta_a)}}\int_{[s_\ell,\sigma]}\sqrt{k_{\max}-k(s)}\,ds\,.  
\end{aligned}
\]
\end{proposition}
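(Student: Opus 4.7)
The strategy is to apply the conjugated-operator estimate of Theorem~\ref{thm:dec}, suitably adapted to the full ``double-well'' operator $\mathcal N_{\hbar}$ from \eqref{eq:Nh}, to the tilted function $\psi := e^{\varphi/\hbar^{1/2}} u$. The first step is to record the two-well analogue of Theorem~\ref{thm:dec}: the same pseudo-differential/Fefferman-Phong argument goes through verbatim once Assumption~\ref{ass:phi}(ii) is weakened to the condition that $\mathfrak v(\sigma)-\frac{\mu_a''(\zeta_a)}{2}\varphi'(\sigma)^2\geq M\hbar$ outside $\hbar^{1/2}$-neighborhoods of \emph{both} wells, and the elliptic estimate then takes the form
\[c\hbar^2\|\psi\|\leq \|\langle\tau\rangle^6(\mathcal N_\hbar^{\varphi}-z)\psi\|+\hbar^2\sum_{w\in\{s_r,s_\ell\}}\|\chi_0(\hbar^{-1/2}R^{-1}(\sigma-w))\psi\|.\]
This is the only modification of Section~\ref{sec:dec} required; its justification proceeds word-for-word as the analogous statement in \cite{BHR21}.

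The second step is to verify this relaxed assumption for $\varphi=(1-\theta)^{1/2}\min(\tilde\Phi_r,\tilde\Phi_\ell)$. The eikonal identity $\Phi_r'(\sigma)^2=\frac{2\mathfrak v(\sigma)}{\mu_a''(\zeta_a)}$ (and its analogue for $\Phi_\ell$), built into the definitions \eqref{eq:Ag-r}--\eqref{eq:pot-r}, delivers the pointwise bound
\[\mathfrak v(\sigma)-\frac{\mu_a''(\zeta_a)}{2}\varphi'(\sigma)^2\geq \theta\,\mathfrak v(\sigma)\geq 0\]
wherever $\tilde\Phi_r=\Phi_r$ or $\tilde\Phi_\ell=\Phi_\ell$. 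The periodic extensions $\tilde\Phi_r,\tilde\Phi_\ell$ must be designed so that this inequality also survives on the short gluing intervals. The non-degeneracy $k''(s_r)=k''(s_\ell)<0$ then gives $\mathfrak v(\sigma)\geq c\min(|\sigma-s_r|^2,|\sigma-s_\ell|^2)$ in the tangential variable, so outside $R\hbar^{1/2}$-neighborhoods of the wells $\theta\mathfrak v(\sigma)\geq \theta c R^2\hbar\geq M\hbar$, once $R$ is taken large enough.

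With these ingredients in hand, I apply the two-well estimate to $\psi=e^{\varphi/\hbar^{1/2}}u$ with $z=\lambda$. Since $u$ is an eigenfunction of $\mathcal N_\hbar$, we have $(\mathcal N_\hbar^{\varphi}-\lambda)\psi=0$, killing the first term on the right-hand side and leaving
\[c\hbar^2\|e^{\varphi/\hbar^{1/2}}u\|\leq \hbar^2\sum_{w\in\{s_r,s_\ell\}}\|\chi_0(\hbar^{-1/2}R^{-1}(\sigma-w))\,e^{\varphi/\hbar^{1/2}}u\|.\]
On the support of each cut-off, $|\sigma-w|\leq 2R\hbar^{1/2}$, and the quadratic vanishing of $\tilde\Phi_r$ at $s_r$ (resp.\ of $\tilde\Phi_\ell$ at $s_\ell$) gives $\varphi(\sigma)\leq CR^2\hbar$ there, so $e^{\varphi/\hbar^{1/2}}\leq e^{CR^2\hbar^{1/2}}$ on the cut-off. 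Dividing by $\hbar^2$, squaring, and absorbing $e^{CR^2\hbar^{1/2}}$ into $e^{\varepsilon/\hbar^{1/2}}$ (trivially true for $\hbar$ small) yields the claim.

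The main obstacle I anticipate is the careful construction of the $2L$-periodic extensions $\tilde\Phi_r$ and $\tilde\Phi_\ell$ that preserve the pointwise eikonal bound $\varphi'(\sigma)^2\leq \frac{2\mathfrak v(\sigma)}{\mu_a''(\zeta_a)}$ everywhere — in particular across the cut that enforces periodicity. The saving grace is that $\Phi_r'(s_\ell)=0$ (since $k(s_\ell)=k_{\max}$) and $\Phi_\ell'(s_r)=0$, so one can smoothly flatten $\Phi_r$ past $s_\ell-\eta$, and $\Phi_\ell$ past $s_r-\eta$, without violating the eikonal inequality. Apart from this geometric bookkeeping, all the serious analytic work has already been carried out in the proof of Theorem~\ref{thm:dec}.
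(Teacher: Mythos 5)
Your overall plan — apply a two-well version of Theorem~\ref{thm:dec} to $\psi=e^{\varphi/\hbar^{1/2}}u$ and exploit $(\mathcal N_\hbar^\varphi-\lambda)\psi=0$ — is indeed the approach of the paper (which simply refers to \cite[Corol.~5.7, Corol.~6.1, Prop.~6.2]{BHR21}). The part that is too quick, and in fact wrong as stated, is the claim that the $2L$-periodic extensions $\tilde\Phi_r,\tilde\Phi_\ell$ can be built so that the eikonal bound $\varphi'(\sigma)^2\leq\frac{2\mathfrak v(\sigma)}{\mu_a''(\zeta_a)}$ holds \emph{exactly} everywhere. This is impossible. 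Indeed, with $\tilde\Phi_r=\Phi_r$ on $[-L,s_\ell-\eta]$ and $2L$-periodicity one has $\tilde\Phi_r(L^-)=\tilde\Phi_r(-L^+)=\Phi_r(-L)$, while if one insists (as one must, for $\min(\tilde\Phi_r,\tilde\Phi_\ell)$ to vanish at the well) that $\tilde\Phi_r(s_r)=0$, then
\[
\int_{s_r}^{L}|\tilde\Phi_r'|\,d\sigma\;\geq\;\tilde\Phi_r(L)-\tilde\Phi_r(s_r)=\Phi_r(-L)
\;>\;\Phi_r(L)=\int_{s_r}^{L}\sqrt{\tfrac{2\mathfrak v}{\mu_a''(\zeta_a)}}\,d\sigma,
\]
since $\Phi_r(-L)-\Phi_r(L)=\mathsf S_{\mathsf u}^a>0$ by the symmetry of $\Gamma$. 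Hence $|\tilde\Phi_r'|>\sqrt{2\mathfrak v/\mu_a''(\zeta_a)}$ on a set of positive measure; the eikonal is necessarily violated in the gluing region, and your argument about flattening $\Phi_r$ near $s_\ell$ (where $\Phi_r'=0$) does not address the real obstruction, which lives near $\sigma=\pm L$.

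This is precisely why the conclusion of Proposition~\ref{prop:dec-bs} carries the factor $e^{\varepsilon/\hbar^{1/2}}$, which your argument, as written, would erroneously eliminate (your final bound is $\int e^{2\varphi/\hbar^{1/2}}|u|^2\lesssim\|u\|^2$, strictly stronger than the stated estimate). The correct route is to apply the two-well conjugated estimate with a Lipschitz weight $\hat\varphi$ that does satisfy Assumption~\ref{ass:phi} — e.g.\ one built from the genuine circular Agmon distance, for which the eikonal violation of $\tilde\Phi_r$ is harmless because it occurs in the region where $\tilde\Phi_\ell<\tilde\Phi_r$ — and then show $\varphi\leq\hat\varphi+\varepsilon/2$ for $\eta$ small; the $\varepsilon/2$ discrepancy, localized near $\pm L$, is what becomes $e^{\varepsilon/\hbar^{1/2}}$. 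Your first two steps (the two-well elliptic estimate, and the verification of the relaxed Assumption~\ref{ass:phi} on the regions where $\tilde\Phi_r=\Phi_r$ or $\tilde\Phi_\ell=\Phi_\ell$) are sound; the missing step is the comparison between the weight that actually satisfies the hypothesis of the elliptic estimate and the weight that appears in the statement.
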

\begin{remark}\label{rem:dec-bs}
The  estimate in Proposition~\ref{prop:dec-bs} continues to hold if $\lambda$ is an eigenvalue of the right or left well operator, $\mathcal N_{\hbar,r}$ or $\mathcal N_{\hbar,\ell}$, and $u$ is a corresponding eigenfunction.
\end{remark}
Returning to the `one well' operators, $\mathcal  N_{\hbar,r,\gamma_0}$ and $\mathcal N_{\hbar,\ell,\gamma_0}$ introduced in  \eqref{eq:Nh-r} and \eqref{eq:Nh-l} respectively, we get  from Proposition~\ref{prop:dec-bs} and the min-max principle, the following rough estimate, important for the analysis of tunneling later on,
\begin{equation}\label{eq:ev-sw}
\mu_1^\textrm{ sw}(\hbar)-\tilde{\mathcal O}(e^{-\mathsf{S}^a/\sqrt{\hbar} })\leq \nu_{1,a}(\hbar)\leq \nu_{2,a}(\hbar)\leq \mu_1^\textrm{ sw}(\hbar)+\tilde{\mathcal  O}(e^{-\mathsf{S}^a/\sqrt{\hbar}}),
\end{equation}
where $\mathsf S^a$ is introduced in \eqref{defSa},
\[\mu_1^\textrm{ sw}(\hbar)=\inf\sigma(\mathcal N_{\hbar,r,\gamma_0})= \inf\sigma(\mathcal N_{\hbar,r,\gamma_0})\,,\]
$(\nu_{j,a}(\hbar))_{j\geq 1}$ is the sequence of eigenvalues of the operator $\mathcal N_{\hbar}$, and 
the notation $\tilde{\mathcal  O}(e^{-\mathsf{S}^a/\sqrt{\hbar}})$ means
\begin{equation}\label{eq:not-tO}
 {\mathcal  O}(e^{(\epsilon -\mathsf{S}^a)/\sqrt{\hbar}})\quad\textrm{ for~any~fixed~}\varepsilon>0. \end{equation}
The analysis of the tunneling requires an explicit approximation of the ground state of  the single well 
operators. Recall  the Agmon distance  $\Phi_r$ and the WKB solution $\Psi_{\hbar,r}$ introduced in  \eqref{eq:Ag-r} and  Theorem~\ref{thm:WKB} respectively. Consider the flux free `right well' operator $\mathcal  N_{\hbar,r}:=\mathcal N_{\hbar,r,0}$. By \eqref{eq:r-well}, the low lying eigenvalues of this operator are simple; we denote by $\Pi_r$ the orthogonal projection on its first eigenspace. By \cite[Prop.~6.3]{BHR21}, it results from Theorem~\ref{thm:dec}:

\begin{proposition}[WKB  approximation]\label{prop:WKB}
We have
\[\|\psi_{\hbar,r}-\Pi_r\psi_{\hbar,r}\|_{L^2(\mathbb  R^2)}=\mathcal{O}(\hbar^\infty)\]
and
	\begin{equation}\label{eq:app.WKB1'}
	\langle\tau\rangle\, e^{\Phi_{r}/\sqrt{\hbar}}(\Psi_{\hbar,r}-u_{\hbar,r})=\mathcal
	{O}(\hbar^\infty)\quad\textrm{  in}~\mathscr{C}^1(K;L^2(\mathbb R)),
	\end{equation}
	where $K\subset I_{2\hat\eta,r}:=(s_\ell-2L+\hat\eta,s_\ell-\hat\eta)$ is a compact set, 
	\[\psi_{\hbar,r}(\sigma,\tau):=\chi_{\hat\eta,r}(\sigma)\Psi_{\hbar,\tau}(\sigma,\tau)\,, \]
	and $\chi_{\hat\eta,r}$ is a cut-off function supported in $I_{\hat\eta,r}$  such  that $\chi_{\hat\eta,r}=1$ on  $I_{2\hat\eta,r}$.
\end{proposition}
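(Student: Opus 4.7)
The plan is to derive both assertions from the formal WKB construction (Theorem \ref{thm:WKB}), combined with the tangential elliptic estimate (Theorem \ref{thm:dec}) and the spectral-gap asymptotics \eqref{eq:r-well} for the single-well operator. The key observation is that $\psi_{\hbar,r}$, as a compactly supported cutoff of the WKB solution, is an extremely sharp quasi-mode for $\mathcal N_{\hbar,r}$ at the formal WKB eigenvalue $\delta(\hbar)$.

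For the first claim, fix an arbitrary $N\in\mathbb N$ and work with the order-$N$ truncations $\Psi^N_{\hbar,r}$ and $\delta^N(\hbar)$ from Remark \ref{rem:WKB-not}, setting $\psi^N_{\hbar,r}=\chi_{\hat\eta,r}\Psi^N_{\hbar,r}$. Applying $\mathcal N_{\hbar,r}-\delta^N(\hbar)$ and expanding via the Leibniz rule produces a main term controlled by $\mathcal O(\hbar^N e^{-\Phi_r/\sqrt\hbar})$ from Theorem \ref{thm:WKB} (hence $\mathcal O(\hbar^N)$ in $L^2(\mathbb R^2)$, since $\Phi_r$ is bounded on $\mathrm{supp}\,\chi_{\hat\eta,r}$) plus a commutator $[\mathcal N_{\hbar,r},\chi_{\hat\eta,r}]\Psi^N_{\hbar,r}$ supported in $I_{\hat\eta,r}\setminus I_{2\hat\eta,r}$, where $\Phi_r$ is bounded below by a positive constant, so that contribution is $\mathcal O(e^{-c/\sqrt\hbar})$. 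Combined with the spectral gap $\lambda_2-\lambda_1\gtrsim\hbar^{7/4}$ from \eqref{eq:r-well} and the fact that $\delta^N(\hbar)=\lambda_1(\mathcal N_{\hbar,r})+o(\hbar^{7/4})$ for $N$ large enough, a standard functional-calculus argument on $\mathrm{ran}(\mathrm{Id}-\Pi_r)$ yields
\[\|(\mathrm{Id}-\Pi_r)\psi^N_{\hbar,r}\|_{L^2}\lesssim \hbar^{-7/4}\|(\mathcal N_{\hbar,r}-\delta^N(\hbar))\psi^N_{\hbar,r}\|_{L^2}=\mathcal O(\hbar^{N-7/4}).\]
Since $N$ is arbitrary, this gives the first assertion.

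For the second claim, the first part and the near-normalization $\|\Psi_{\hbar,r}\|=1+\mathcal O(\hbar^\infty)$ from Remark \ref{rem:phase} allow a choice of phase for $u_{\hbar,r}$ such that $\langle \psi_{\hbar,r},u_{\hbar,r}\rangle=1+\mathcal O(\hbar^\infty)$. Hence $r_\hbar:=u_{\hbar,r}-\psi_{\hbar,r}$ satisfies $\|r_\hbar\|_{L^2}=\mathcal O(\hbar^\infty)$ and
\[(\mathcal N_{\hbar,r}-\lambda_1(\hbar))r_\hbar=-(\mathcal N_{\hbar,r}-\lambda_1(\hbar))\psi_{\hbar,r},\]
whose right-hand side inherits the weighted smallness of the WKB error. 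I then introduce the Agmon weight $\varphi=(1-\theta)^{1/2}\Phi_r$ with $\theta\in(0,1)$, which satisfies Assumption \ref{ass:phi} (exactly as used in Proposition \ref{prop:dec-bs}), and set $\tilde r_\hbar:=e^{\varphi/\sqrt\hbar}r_\hbar$. The conjugation identity \eqref{eq:conj-op}, the fact that $\varphi\le\Phi_r$ absorbs the Agmon weight of the WKB error, and the fact that the commutator with $\chi_{\hat\eta,r}$ lives where $\Phi_r-\varphi\ge c>0$, together imply that $(\mathcal N_{\hbar,r}^\varphi-\lambda_1(\hbar))\tilde r_\hbar=\mathcal O(\hbar^\infty)$ in $\langle\tau\rangle^{-6}L^2$. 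The local term $\hbar^2\|\chi_0(\hbar^{-1/2}R^{-1}(\sigma-s_r))\tilde r_\hbar\|$ appearing in Theorem \ref{thm:dec} is also $\mathcal O(\hbar^\infty)$, since $\varphi(s_r)=\varphi'(s_r)=0$ makes the weight bounded on that region and $\|r_\hbar\|=\mathcal O(\hbar^\infty)$. Theorem \ref{thm:dec} therefore gives $\|\tilde r_\hbar\|_{L^2}=\mathcal O(\hbar^\infty)$.

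To upgrade this to $\mathscr C^1(K;L^2(\mathbb R))$ with the $\langle\tau\rangle$ factor, I apply the second estimate in Theorem \ref{thm:dec} to obtain $\|\hbar^2\partial_\sigma^2\tilde r_\hbar\|_{L^2}=\mathcal O(\hbar^\infty)$, interpolate with the $L^2$ bound to deduce $\|\hbar\partial_\sigma\tilde r_\hbar\|_{L^2}=\mathcal O(\hbar^\infty)$, and invoke a one-dimensional Sobolev embedding on the compact set $K$; the $\langle\tau\rangle$ weight is absorbed by the normal Agmon decay of the flat-edge ground state $\phi_a$ together with the $\langle\tau\rangle^{-6}$ factor already present in the right-hand side of Theorem \ref{thm:dec}. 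I expect the main obstacle to be the careful bookkeeping of these $\langle\tau\rangle$-weighted operator-valued norms through the cutoff $c_\mu$, the commutators, and the conjugation \eqref{eq:conj-op}; this is managed exactly as in \cite[Prop.~6.3]{BHR21} thanks to the symbol decomposition \eqref{eq.nj}, with the de\,Gennes model replaced throughout by the magnetic-step model of Section \ref{sec:FlatEdge}.
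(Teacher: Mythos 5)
Your argument follows the strategy of \cite[Prop.~6.3]{BHR21}, which is exactly what the paper invokes; the paper offers no independent proof of Proposition~\ref{prop:WKB} beyond citing that result together with Theorem~\ref{thm:dec}, and both halves of your sketch (quasi-mode plus spectral gap for the projection estimate, and Agmon conjugation by $e^{\varphi/\sqrt\hbar}$ with $\varphi=(1-\theta)^{1/2}\Phi_r$ plus Theorem~\ref{thm:dec} for the weighted error) are the right ingredients. The only slip is in the last step: the weight appearing in Theorem~\ref{thm:dec} is $\langle\tau\rangle^{6}$, not $\langle\tau\rangle^{-6}$, and it acts on the source term, i.e.\ on the unfavorable side of the inequality, so it cannot by itself deliver a $\langle\tau\rangle$-weighted control of $\tilde r_\hbar$; the $\langle\tau\rangle$ factor in \eqref{eq:app.WKB1'} is instead recovered by interpolating the unweighted tangential estimate of Theorem~\ref{thm:dec} against the exponential decay in $\tau$ of both $u_{\hbar,r}$ (from the normal Agmon estimate) and $\Psi_{\hbar,r}$ (explicit from the WKB form).
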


\section{Interaction matrix and tunneling}\label{sec:IM}
We return to the operator $\mathcal N_{\hbar}$ introduced in \eqref{eq:Nh}.  In order to estimate the splitting between the first and second eigenvalues, $\nu_2(\hbar)-\nu_1(\hbar)$, we will write the matrix of this operator  in a specific basis of 
\[E=\oplus_{i=1}^2\textrm{ Ker}\big(\mathcal N_\hbar-\nu_i(\hbar)\big). \]
Let $\Pi$ be the orthogonal  projection  on $E$.  We introduce the two functions
\[ f_{\hbar,r}=\chi_{\hat\eta,r}\phi_{\hbar,r},\qquad \qquad  f_{\hbar,\ell}=\chi_{\hat\eta,\ell}\phi_{\hbar,\ell},\]
where $\chi_{\hat\eta,r}$ is the cut-off function introduced in Proposition~\ref{prop:WKB},  $\chi_{\hat\eta,\ell}=U\chi_{\hat\eta,r}$ is defined by the symmetry operator (see Sec.~\ref{sec:LWop}),
$\phi_{\hbar,r}$, $\phi_{\hbar,\ell}$ enjoy periodicity  properties and are defined by   inspiration  from the functions in \eqref{eq:gs-r} and \eqref{eq.phil0}.  In fact,  $\phi_{\hbar,r}(\sigma,\tau)$ need to be defined in the support  of $\chi_{\hat\eta,r}$. Starting on $[-L,s_\ell-\frac{\hat\eta}2)\times\R$,   we take $\phi_{\hbar,r}(\sigma,\tau)$    the same as the function in \eqref{eq:gs-r}; on $[s_\ell+\frac{\hat\eta}{2},L)\times\R$,  we do  a change of variable,  and modify the function in \eqref{eq:gs-r} so  that its module satisfies a periodic boundary condition on $\pm L$. More precisely,  we have
\begin{equation}\label{eq:phi-h-r}
\phi_{\hbar, r}(\sigma,\tau) = \begin{cases}
e^{ -i\gamma_0 \sigma/\hbar^2}u_{\hbar,r}(\sigma,\tau),&\mbox{if }-L\leq \sigma\leq s_\ell-\frac{\hat\eta}{2},
\\
e^{ -i\gamma_0 (\sigma-2L)/\hbar^2}u_{\hbar,r}(\sigma-2L,\tau),&\mbox{if } s_\ell + \frac{\hat\eta}{2}<\sigma< L,
\end{cases}
  \end{equation}
  and so $f_{\hbar,r}$ is well defined on $[-L,L)$.  In a similar fashion, 
 \begin{equation}\label{eq:phi-h-l}
\phi_{\hbar, \ell}(\sigma,\tau) = 
\begin{cases}
e^{-i\gamma_0 (\sigma+2L)/\hbar^2}u_{\hbar,\ell}(\sigma+2L,\tau),&\mbox{if }-L\leq \sigma\leq s_r - \frac{\hat\eta}{2},
\\
e^{-i\gamma_0 \sigma/\hbar^2}u_{\hbar,\ell}(\sigma,\tau),&\mbox{if } s_r+\frac{\hat\eta}2<\sigma< L,
\end{cases}
\end{equation}
and $f_{\hbar, \ell}$ is well defined on $[-L,L)$. 

Also we introduce  the following actual bound  states by projecting on the eigenspace $E$,
\[ g_{\hbar,r}=\Pi f_{\hbar,r},\qquad\qquad g_{\hbar,\ell}=\Pi f_{\hbar,\ell}\,.\]
We use the notation $\tilde{\mathcal O}$ in \eqref{eq:not-tO}. By Proposition~\ref{prop:dec-bs}, we have (see \cite[Sec.~7.1]{BHR21} and \cite[Sec.~3]{BHR} for details)
\begin{align*}
\|f_{\hbar,r}\|^2&=1+\tilde{\mathcal  O}(e^{-2\mathsf{S}^a/\sqrt{\hbar}}),\quad\|f_{\hbar,\ell}\|^2=1+\tilde{\mathcal  O}(e^{-2\mathsf{S}^a/\sqrt{\hbar}}), \\
\langle f_{\hbar,r}, f_{\hbar,\ell}\rangle&=\tilde{\mathcal  O}(e^{-\mathsf{S}^a/\sqrt{\hbar}}),
\end{align*}
and
\[\|g_{\hbar,\alpha}-f_{\hbar,\alpha}\|+ \|\partial_\tau(g_{\hbar,\alpha}-f_{\hbar,\alpha})\|=\tilde{\mathcal  O}(e^{-\mathsf{S}^a/\sqrt{\hbar}})\qquad\alpha\in\{r,\ell\}. \]
Now,  we construct an  orthonormal basis $\mathcal B_{\hbar}:=\{\tilde g_{\hbar,r},\tilde g_{\hbar,\ell}\}$ of $E$ from
$\{g_{\hbar,r},g_{\hbar,\ell}\}$ by the Gram-Schmidt process. Let $\mathsf M$ be the { matrix} of $\mathcal N_{\hbar}$ relative to  the basis $\mathcal B_{\hbar}$. Then
\begin{equation}\label{eq:gap}
\nu_2(\hbar)-\nu_1(\hbar)=2|w_{\ell,r}| +\tilde{\mathcal O}(e^{-2\mathsf{S}/\sqrt{\hbar}})\,,\quad w_{\ell,r}=\langle r_{\hbar,\ell},f_{\hbar,r}\rangle,
\end{equation}
where
\[  r_{\hbar,\ell}=(\mathcal N_{\hbar,\ell}-\mu^\textrm{ sw}(\hbar))f_{\hbar,\ell}.\]
All we have to do now is the computation of $w_{\ell,r}$ by the WKB approximation in Proposition~\ref{prop:WKB}. By \cite[Lem.~7.1]{BHR21}(which is essentially an integration by parts formula)
\begin{multline*}w_{\ell,r}=i\hbar\int_\R a_\hbar^{-1}\left(\phi_{\hbar,\ell}\overline{\mathscr{D}_\hbar\phi_r}+[\mathscr{D}_\hbar\phi_{\hbar,\ell}]\overline{\phi_{\hbar,r}}\right)(0,\tau){ d\tau}\\
-i\hbar\int_\R   a_\hbar^{-1}\left(\phi_{\hbar,\ell}\overline{\mathscr{D}_\hbar\phi_{\hbar,r}}+[\mathscr{D}_\hbar\phi_\ell]\overline{\phi_{\hbar,r}}\right)(-L,\tau)d\tau,
\end{multline*}
where
\[ \mathscr{D}_\hbar=\hbar D_\sigma+\hbar^{-1}\gamma_0-b_a(\tau)\tau+\hbar c_\mu \frac{k}{2}b_a(\tau)\tau^2\,.\]
Writing $a_\hbar=1+o(1)$, $\hbar c_\mu\tau^2=o(\hbar^{-2\eta})$, and approximating $\phi_{\hbar,r}$, $\phi_{\hbar,\ell}$ by using \eqref{eq:phi-h-r}, \eqref{eq:phi-h-l} and Proposition~\ref{prop:WKB}, we get (see \cite[Sec.~7.2.2]{BHR21} for details)
\begin{equation}\label{eq:w-r-l}
w_{r,\ell}=i\hbar(w_{r,\ell}^\textsf{ u}+ w_{r,\ell}^\textsf{ d})
\end{equation}
where
\begin{align*}
w_{\ell,r}^\textsf{ u}&=
\int_\R a_\hbar^{-1}\Psi_{\hbar,\ell}\overline{\left(\hbar D_\sigma-b_a\tau+\hbar c_\mu\frac{\kappa}{2}b_a\tau^2\right)\Psi_{\hbar,r}}(0,\tau)d\tau\\
&+\int_\R a_\hbar^{-1}
\left[\left(\hbar D_\sigma-b_a\tau+\hbar c_\mu\frac{\kappa}{2}b_a\tau^2\right)\Psi_{\hbar,\ell}\right]\overline{\Psi_{\hbar,r}}(0,\tau)d\tau
+\mathcal{O}(\hbar^\infty)e^{-\mathsf{S}_{\mathsf{u}}^a/\hbar^{1/2}} ,
\intertext{and}
w_{\ell,r}^\textsf{ d}&=
\int_\R a_\hbar^{-1}\Psi_{\hbar,\ell}\overline{\left(\hbar D_\sigma-b_a\tau+\hbar c_\mu\frac{\kappa}{2}b_a\tau^2\right)\Psi_{\hbar,r}}(-L,\tau)d\tau\\
&+\int_\R a_\hbar^{-1}
\left[\left(\hbar D_\sigma-b_a\tau+\hbar c_\mu\frac{\kappa}{2}b_a\tau^2\right)\Psi_{\hbar,\ell}\right]\overline{\Psi_{\hbar,r}}(-L,\tau)d\tau
+\mathcal{O}(\hbar^\infty)e^{-\mathsf{S}_{\mathsf{d}}^a/\hbar^{1/2}} .
\end{align*}
Eventually, using \eqref{eq:eff-op} and \eqref{eq.psir}, we get (see \cite[Eq.~(7.11)]{BHR21}) 
\[\hbar^{\frac14}e^{\mathsf{S}_{\mathsf{u}}^a/\hbar^{1/2}}w_{\ell,r}^\textsf{ u}=-i \hbar^{\frac12}\mu_a''(\zeta_a) \pi^{-\frac 12} g_a^{\frac12}\sqrt{V_a(0)} \mathsf{A}_{\mathsf{u}}^ae^{-2i\alpha_0(0)}+\mathcal {O}(\hbar) \]
and, in a similar  fashion,
\begin{multline*}
\hbar^{\frac14}e^{\mathsf{S}_{\mathsf{d}}^a/\hbar^{1/2}}w_{\ell,r}^\textsf{ d}\\
=-i \hbar^{\frac12}\mu_a''(\zeta_a) \pi^{-\frac 12} g_a^{\frac12}\sqrt{V_a(-L)} \textsf{A}_{\mathsf{d}}^ae^{-2i\alpha_0(-L)}e^{i ( -2L\gamma_0/\hbar^2-2L\zeta_a/\hbar)}+\mathcal {O}(\hbar) \,,\end{multline*}
 where $\alpha_0$  is the function introduced in Remark~\ref{eq.effectiveT}.
 
 Collecting \eqref{eq:w-r-l} and \eqref{eq:gap} and using that $\hbar=h^{1/2}$, we get
 \[\begin{aligned}
  \nu_2(\hbar)-\nu_1(\hbar)&=2|e^{iLf(h)}w_{\ell,r}|+\tilde{\mathcal O}(e^{-2\mathsf S^a/\sqrt{\hbar}})\\
  &=h^{-1}| w_a(h)|+\tilde{\mathcal O}(h^{-1}e^{-2\mathsf S^a/\sqrt{\hbar}}),
  \end{aligned}\]
  where $f(h)$  and  $\tilde w_a(h)$ are the expressions in Theorem~\ref{thm:FHK}. In light of 
  Proposition~\ref{prop:BHR2.7},  this  finishes the proof of Theorem~\ref{thm:FHK}.
\section{Conclusion and open problems}

 Until  now, examples of magnetic tunneling effects  are rare in the literature.  Very few articles have been dealing with the measure of the tunneling effect due to the presence of the magnetic field. In  the presence of an electric potential with multiple wells, the article \cite{HeSj7}  was only considering   a case when the
 magnetic field was a perturbation and the tunneling  was  mainly  created by the electric potential. Other examples  include the case of a pure flux \cite{KR17}. After the recent contributions of Bonnaillie-H\'erau-Raymond  \cite{BHR}  and  Fefferman-Shapiro-Weinstein \cite{FSW} (see also references therein), we have presented  a new magnetic tunneling effect
  due to the curvature of the magnetic edge.
  
 Both for the Neumann problem occurring in surface superconductivity \cite{HM3D} or for the problem considered here \cite{AG}, it would be interesting to consider the $(3D)$-case. 
  
  Excluded in this paper is the case $a=-1$, where localization near  the point(s) of maximum curvature no more occurs ($M_3(a)=0$ in the asymptotics of Theorem~\ref{thm:AHK}).  In   contrast, this  case seems to feature an interesting new phenomenon where localization near the whole edge $\Gamma$ occurs, which  also has a nice analogy  to what was observed in the multiple wells situation in \cite{HeSj6}. We hope to come back to the treatment of this case rather soon.
  
  Finally we mention that the standard purely magnetic double well problem seems at the moment a difficult  challenge. Here we consider a purely semi-classical magnetic Laplacian (say in $\mathbb R^2$)   where the magnetic 
   field has two symmetric non degenerate positive minima.

\section{Appendix: Regularized resolvent and moments}

Let us return back to the flat  edge model in Sec.~\ref{sec:FlatEdge}  and recall some necessary computational results.

We  can invert the operator $\mathfrak h_a[\zeta_a]$ on the orthogonal complement of the ground state $\phi_a$. Extending by  linearity, we get  the regularized resolvent $\mathfrak R_a$ defined on  $L^2(\mathbb R )$ by
	\begin{equation}\label{eq:R}
	\mathfrak R_a\, u =
	\begin{cases}
	0&\mathrm{if}~u\parallel\phi_a\\
	(\mathfrak h_a[\zeta_a]-\beta_a)^{-1}u&\mathrm{if}~u\perp \phi_a
	\end{cases}\,.
	\end{equation}
By \cite{AK20}, $(\zeta_a+b_a(\tau) \tau)\phi_a$ and  $\phi_a$ are orthogonal in $L^2(\mathbb R)$:
\begin{equation}\label{eq:m0}
\int_{\mathbb R}(\zeta_a+b_a(\tau) \tau)|\phi_a(\tau)|^2\,d\tau=0\,.
\end{equation} 
Later on we will encounter the following integral \cite[Prop.~2.5]{AHK}
\begin{equation}\label{eq:I2}  
I_2(a):=\int_{\mathbb R }\phi_a(\tau) \mathfrak R_a[(\zeta_a+b_a(\tau) \tau)\phi_a]\,d\tau
 =\frac14-\frac{\mu_a''(\zeta_a)}{8}\,.
\end{equation}	
We  recall   some identities from \cite{AK20}  involving for $n\in \mathbb N $  quantities of the form 
\begin{equation}\label{eq:moments}
M_n(a)=\int_{\mathbb R}\frac 1{b_a(\tau)}(\zeta_a+b_a(\tau)\tau)^n|\phi_a(\tau)|^2\,d\tau\,. \end{equation}
We have
	\begin{align}
	M_1(a)&=0\,,\label{eq:m1}\\
	M_2(a)&=-\frac 12 \beta_a\int_{\mathbb R}\frac 1{b_a(\tau)}|\phi_a(\tau)|^2\,d\tau+\frac 14\Big(\frac 1a-1\Big)\zeta_a\phi_a(0)\phi_a'(0)\,, \label{eq:m2}\\
	M_3(a)&=\frac 13\Big(\frac 1a-1\Big)\zeta_a\phi_a(0)\phi_a'(0)\,.\label{eq:m3*}
	\end{align}
The case $a=-1$ is special  because
\[M_3(-1)=0 \quad \mathrm{and}\quad M_3(a)<0 \quad \mathrm{for} \quad -1<a<0\,.\]
\begin{remark}\label{rem:AHK2.3}
The next identities   follow in a straightforward manner \cite[Rem.~2.3]{AHK},
\begin{align*}
\int_{\mathbb R}\tau(\zeta_a+b_a(\tau) \tau)|\phi_a(\tau)|^2\,d\tau&=M_2(a),\\
\int_{\mathbb R}\tau(\zeta_a+b_a(\tau) \tau)^2|\phi_a(\tau)|^2\,d\tau&=M_3(a) -\zeta_aM_2(a),\\
\int_{\mathbb R}b_a(\tau) \tau^2(\zeta_a+b_a(\tau) \tau)|\phi_a(\tau)|^2\,d\tau&=M_3(a)-2\zeta_aM_2(a),
\end{align*}
and
\begin{align*}
\int_{\mathbb R}\tau|\phi_a(\tau)|^2\,d\tau&=-\zeta_a\int_{\mathbb R}\frac 1{b_a(\tau) }|\phi_a(\tau)|^2\,d\tau,\\
\int_{\mathbb R}\tau|\phi'_a(\tau)|^2\,d\tau&=\beta_a\zeta_a\int_{\mathbb R}\frac 1{b_a(\tau) }|\phi_a(\tau)|^2\,d\tau+2M_3(a) -2\zeta_aM_2(a).
\end{align*}

\end{remark}
\subsection*{Acknowledgments} The authors would like to thank the anonymous referee for  the attentive reading of this paper and  the valuable comments.
\subsection*{Funding}
Part of this work was done while SF and AK visited the Laboratoire Jean Leray at the university of Nantes.  They wish to thank \emph{la f\'ed\'eration de recherche math\'ematique des Pays de Loire} and  \emph{Nantes Universit\'e} for supporting the  visit.

\end{document}